\DeclareFontFamily{OT1}{pzc}{}
\DeclareFontShape{OT1}{pzc}{m}{it}{<-> s * [1.10] pzcmi7t}{}
\DeclareMathAlphabet{\mathpzc}{OT1}{pzc}{m}{it}
\newtheorem{theorem}{Theorem}[section]
\newtheorem{proposition}{Proposition}[section]
\newtheorem{lemma}{Lemma}[section]
\newtheorem{remark}{Remark}[section]
\numberwithin{equation}{section}
\numberwithin{theorem}{section}
\numberwithin{lemma}{section}
\numberwithin{corollary}{section}
\numberwithin{proposition}{section}
\numberwithin{remark}{section}
\newcommand{\x}{\mathpzc{x}}
\newcommand{\f}{\mbox{\small{$\mathpzc{F}$}}}
\newcommand{\w}{\mbox{\scriptsize{$\mathpzc{W}$}}}
\newcommand{\sT}{\mathcal T}
\newcommand{\ca}{\mathpzc{a}}
\newcommand{\cb}{\mathpzc{b}}
\newcommand{\cc}{\mathpzc{c}}
\newcommand{\cB}{\mathcal B}
\newcommand{\wC}{\widetilde{C}}
\newcommand{\wD}{\widetilde{D}}
\newcommand{\wcD}{\widetilde{\mathcal D}}
\newcommand{\wM}{\widetilde{M}}
\newcommand{\wV}{\widetilde{V}}
\newcommand{\wP}{\widetilde{P}}
\newcommand{\wcP}{\widetilde{\mathcal P}}
\newcommand{\cP}{\mathcal P}
\begin{document}

\title[Higher Order Approximation by a Modified MKZ Operator]
      {Higher Order Approximation of Continuous Functions \\ by a Modified Meyer-K\"{o}nig and Zeller-Type Operator}

\author[I. Gadjev]{Ivan Gadjev$^1$}
\address{$^1$\,Faculty of Mathematics and Informatics, Sofia University St. Kliment Ohridski, \newline
         \indent 5, J. Bourchier Blvd, 1164 Sofia, Bulgaria}
\email{gadjev@fmi.uni-sofia.bg}

\author[P. Parvanov]{Parvan Parvanov$^2$}
\address{$^2$\,Faculty of Mathematics and Informatics, Sofia University St. Kliment Ohridski, \newline
         \indent 5, J. Bourchier Blvd, 1164 Sofia, Bulgaria}
\email{pparvan@fmi.uni-sofia.bg}

\author[R. Uluchev]{Rumen Uluchev$^3$}
\address{$^3$\,Faculty of Mathematics and Informatics, Sofia University St. Kliment Ohridski, \newline
         \indent 5, J. Bourchier Blvd, 1164 Sofia, Bulgaria}
\email{rumenu@fmi.uni-sofia.bg}

\thanks{$^{1,2,3}$\,This study is financed by the European Union-NextGenerationEU, through the
        National Recovery and Resilience Plan of the Republic of Bulgaria, project No
        BG-RRP-2.004-0008.}

\keywords{Meyer-K\"{o}nig and Zeller operator, Baskakov operator, Goodman-Sharma operator,
          K-functional, Direct theorem, Strong converse theorem.}

\subjclass{41A35, 41A10, 41A25, 41A27, 41A17.}

\date{}

\begin{abstract}
  A new Goodman-Sharma type modification of the Meyer-K\"{o}nig and Zeller operator for
  approximation of bounded continuous functions on $[0,\,1)$ is presented. We estimate the
  approximation error of the proposed operator and prove direct and strong converse theorems with
  respect to a related K-functional. The operator is linear but not a positive one. However it
  benefits a better order of approximation compared to the Goodman-Sharma variant of
  Meyer-K\"{o}nig and Zeller type operator investigated by Ivanov and Parvanov \cite{IvPa2012}.
\end{abstract}

\maketitle


\section{Introduction} \label{s1}

The Meyer-Konig and Zeller operator (MKZ operator) was introduced in 1960, see \cite{MeKoZe1960}.
A minor modification by Cheney and Sharma \cite{ChSh1964} has led to the following definition of the
MKZ operator for functions $f\in C[0,1)$ and $n\in\mathbb{N}$:
\begin{equation} \label{eq:1.1}
  \begin{gathered}
    M_n^{[MKZ]}(f,x) = \sum_{k=0}^{\infty} f\Big(\frac{k}{k+n}\Big) P_{n,k}(x), \qquad x\in[0,1), \\
    P_{n,k}(x) = \binom{n+k}{k} x^k(1-x)^{n+1}.
  \end{gathered}
\end{equation}
Subsequently the MKZ operator was an object of intensive investigations and we refer the
reader, e.g. to the paper of Totik \cite{To1983} where direct and converse results were proved.

Motivated by the integration of Bernstein polynomials by Derriennic \cite{De1981}, some authors
have modified the MKZ operator to approximate Lebesgue integrable functions on $(0,1)$. Later
Heilmann \cite{He2003b} and  Abel, Gupta and Ivan \cite{AbGuIv2004} merged these Durrmeyer-type
variants into the family of operators defined for $\alpha \in \mathbb{R}$ and $\beta \in \mathbb{Z}$
by
\begin{align} \label{eq:1.2}
  \wM_{n,\alpha,\beta} (f,x) =
  (n+\alpha)\sum_{k=\max\{0,-\beta\}}^{\infty} \bigg(\int_0^1 P_{n+\alpha,k+\beta}(t)(1-t)^{-2}f(t)\,dt\bigg)P_{n,k}(x),
\end{align}
for all $n\in \mathbb{N}$ with $n+\alpha>0$. Let us mention a few special cases:

\smallskip
\begin{itemize}
  \item $\wM_{n,2,0} \equiv \wM^{\;\text{[Chen]}}_n$, introduced by Chen \cite{Ch1986};

  \smallskip
  \item $\wM_{n-1,0,-2} \equiv \wM^{\;\text{[Guo]}}_n$, proposed by Guo \cite{Gu1988} and studied in \cite{GuAh1995,Gu1995};

  \smallskip
  \item $\wM_{n-1,1,0} \equiv \wM^{\;\text{[G-A]}}_n$, introduced by Gupta and Abel \cite{GuAb2004};

  \smallskip
  \item $\wM_{n,\alpha,0} \equiv \wM^{\;\text{[Heilmann]}}_{n,\alpha}$, suggested by Heilmann \cite{He2003a};

  \smallskip
  \item $\wM_{n,0,0} \equiv \wM^{\;\text{[Heilmann]}}_{n,0} \equiv \wM_n$, the ``natural'' MKZ-Durrmeyer operator.
\end{itemize}

\smallskip
In 2012, Ivanov and Parvanov \cite{IvPa2012} investigated the uniform weighted approximation error
of the Goodman-Sharma-type modification of MKZ operator (MKZ-GS operator) defined for
$n\in \mathbb{N}$ by
\begin{equation} \label{eq:1.3}
  \begin{gathered}
    M_n (f,x) = \sum _{k=0}^{\infty}\,u_{n,k}(f)P_{n,k}(x), \qquad x\in [0,1), \\
    u_{n,0}(f) = f(0), \qquad u_{n,k}(f) = n \int_0^1 P_{n,k-1}(t)(1-t)^{-2} f(t)\,dt,
  \end{gathered}
\end{equation}
where $f(x)$ is a Lebesgue integrable function on $(0,1)$ with a finite limit $f(0)$ as $x\to 0$.
According to \eqref{eq:1.2} we have
$$
  M_n (f,x) = f(0) P_{n,0}(x) + \wM_{n,0,-1} (f,x).
$$

The definition of the MKZ-GS operator follows an idea of Chen~\cite{Ch1987} and Goodman and Sharma
\cite{GoSh1988,GoSh1991} (see also \cite{PaPo1994}) for a modification of the Bernstein-Durrmeyer
operator \cite{Du1967} which recovers the linear functions. Similarly, the operator considered in \cite{Fi2005}
by Finta (see also \cite{IvPa2011}) is a Goodman-Sharma-type variant of the Baskakov-Durrmeyer
operator.

Let us denote, as usual, by
$$
  \varphi(x) = x(1-x)^2
$$
the weight function which is naturally associated with the second order derivative of the MKZ-type
operators. We set $D f(x)=f'(x)$, $D^2 f(x)=f''(x)$,
\begin{equation} \label{eq:1.4}
  \wD f(x) := \varphi(x)f''(x)
\end{equation}
and also
$$
  \wD^2 f(x) := \wD \big(\wD f(x)\big), \qquad \wD^3 f(x) := \wD \big(\wD^2 f(x)\big).
$$

The MKZ-GS operator combines nice properties both of the MKZ operator and of its Durrmeyer-type
modifications. Thus, the MKZ-GS operator $M_n^{[MKZ]}$ recovers the linear functions and is suitable
for uniform approximation. Moreover, $M_n$ just like the MKZ-Durrmeyer operator commutes in the
sense $M_n M_k f = M_k M_n f$, as well as it commutes with the differential operator $\wD$, i.e.
$M_n \wD f = \wD M_n f$.

Recently Acu and Agrawal \cite{AcAg2019} and Jabbar and Hassan \cite{JaHa2024} studied
operator families of Bernstein-Durrmeyer type and of Baskakov-type, respectively, where basis
functions in the definitions are replaced by linear combinations of basis functions of lower degree
with coefficients being polynomials of appropriate degree. The advantage is obtaining a better order
of approximation than the classical operators.

The ideas presented in \cite{AcAg2019} and \cite{JaHa2024} prompted the authors of the current paper
to explore a new modification of the MKZ-GS operator, explicitly defined by
\begin{equation} \label{eq:1.5}
  \begin{gathered}
    \wM_n(f,x) = \sum_{k=0}^{\infty} u_{n,k}(f)\wP_{n,k}(x), \qquad x\in[0,1), \\
    \wP_{n,k}(x) = P_{n,k}(x) - \frac{1}{n}\,\wD P_{n,k}(x).
  \end{gathered}
\end{equation}

To state our results we introduce some notations and definitions. Let $L_{\infty}[0,1)$ be the
space of all Lebesgue measurable and essentially bounded functions on $[0,1)$ and $AC_{loc}(0,1)$
consists of the functions absolutely continuous on any subinterval
$[a,b]\subset(0,1)$. We set
$$
  W^2(\varphi)[0,1) := \big\{g\,:\,g,g'\in AC_{loc}(0,1), \ \wD g\in L_{\infty}[0,1)\big\}.
$$
By $W^2_0(\varphi)[0,1)$ we denote the subspace of $W^2(\varphi)[0,1)$ of functions $g$ satisfying
the additional boundary condition
$$
  \lim_{x\to 0^{+}} \wD g(x) = 0.
$$

From now on we will use $\|\cdot\|$ to mean the uniform norm on the interval $[0,1)$. Sometimes
we will specify the interval $[0,a)$ of the uniform norm $\|\cdot\|_{[0,a)}$ with $a=1$ or
$a=\infty$.

Also, we will use the notations
\begin{itemize}
  \item \ $x$ \ for the variable if we consider functions $f$ with domain $[0,1)$,
  \item \ calligraphic symbols: $\x$ for the variable of functions $\f,\w$ with domain $[0,\infty)$.
\end{itemize}

For functions $f\in C[0,1)$ and $t>0$ we define the K-functional
\begin{equation} \label{eq:1.6}
  K(f,t) := \inf \big\{\|f-g\|+t\|\wD^2 g\|\,:\,g\,\in W^2_0(\varphi)[0,1),\,\wD g\in W^2(\varphi)[0,1)\big\}.
\end{equation}

The goal of our study in this work is to estimate the approximation error for functions
$f\in C[0,1)$ using the modified MKZ-GS operator \eqref{eq:1.5}. In particular, by involving the
K-functional \eqref{eq:1.6}, we prove a direct inequality and a strong converse inequality of
Type~B, according to the strong converse inequalities classification due to Ditzian and Ivanov in
\cite{DiIv1993}. Our main results read as follows.

\begin{theorem} \label{th:1.1}
  If $\,n\in\mathbb{N}$, $n\ge 2$, and $f\in C[0,1)$, then
  $$
    \big\|\wM_n f-f\big\| \le \big(\sqrt{6}+1\big)\,K\Big(f,\frac{1}{n^2}\Big).
  $$
\end{theorem}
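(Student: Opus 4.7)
The plan is to apply the standard K-functional technique: for any $g$ admissible in the infimum \eqref{eq:1.6} (that is, $g \in W^2_0(\varphi)[0,1)$ with $\wD g \in W^2(\varphi)[0,1)$), split
$$\wM_n f - f = \bigl[\wM_n(f-g) - (f-g)\bigr] + \bigl[\wM_n g - g\bigr]$$
and take the infimum over $g$. This reduces the theorem to two separate estimates: (i) a sup-norm stability bound $\|\wM_n h - h\| \le C_1 \|h\|$ valid for every $h \in C[0,1)$, and (ii) a smoothness bound $\|\wM_n g - g\| \le C_2\, n^{-2}\|\wD^2 g\|$ valid on admissible $g$, arranged so that $\max(C_1, C_2) \le \sqrt{6}+1$.

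For (i), the trivial bound $|u_{n,k}(h)| \le \|h\|$ yields $|\wM_n h(x)| \le \|h\|\sum_{k} |\wP_{n,k}(x)|$, reducing the task to controlling $\sup_x \sum_k |\wP_{n,k}(x)|$. Since $\wP_{n,k} = P_{n,k} - n^{-1}\wD P_{n,k}$, I would compute $\wD P_{n,k}$ explicitly (two derivatives of the MKZ basis weighted by $\varphi(x) = x(1-x)^2$) and bound the resulting sum, most likely via a Cauchy-Schwarz step in $k$ — the appearance of $\sqrt{6}$ in the final constant strongly hints at an $L^2$-type inequality rather than a purely combinatorial estimate.

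For (ii), the key structural identity, obtained by exchanging the sum with $\wD$ and invoking the commutation $\wD M_n = M_n\wD$ of the MKZ-GS operator, is
$$\wM_n = M_n - \tfrac{1}{n}\wD M_n = M_n\bigl(I - \tfrac{1}{n}\wD\bigr),$$
whence $\wM_n g - g = (M_n g - g) - \tfrac{1}{n}M_n \wD g$. The purpose of the correction $\tfrac{1}{n}M_n \wD g$ is precisely to cancel the leading-order term in the expansion $M_n g - g = \tfrac{1}{n}\wD g + O\bigl(n^{-2}\|\wD^2 g\|\bigr)$ inherited from the analysis in \cite{IvPa2012}, leaving only a genuine second-order residual. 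Making this rigorous calls for an integral (Taylor-remainder) representation of $M_n g - g$ in terms of $\wD^2 g$; the boundary condition $\lim_{x\to 0^+}\wD g(x) = 0$ built into $W^2_0(\varphi)[0,1)$ is exactly what kills boundary terms in the requisite integrations by parts.

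The main obstacle is step (ii): producing a numerically sharp constant. The cancellation of the $\wD g$ term is conceptually clean, but squeezing out the explicit value $\sqrt{6}+1$ will require careful bookkeeping of the moments of the Meyer-K\"{o}nig-Zeller basis $P_{n,k}$ and tight estimates of the kernel arising when $M_n g - g$ is integrated against $\wD^2 g$. Once both $C_1$ and $C_2$ are shown to lie below $\sqrt{6}+1$, taking the infimum over $g$ in \eqref{eq:1.6} delivers the theorem.
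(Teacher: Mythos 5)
Your skeleton is exactly the paper's (split through an admissible $g$, a uniform bound $\|\wM_n h\|\le\sqrt6\,\|h\|$, a Jackson-type bound $\|\wM_n g-g\|\le C_2 n^{-2}\|\wD^2 g\|$, then take the infimum in \eqref{eq:1.6}), but the decisive ingredient, your step (ii), is not proved: you yourself flag the rigorous quantitative estimate as the ``main obstacle'', and the route you sketch --- a Taylor-remainder/kernel representation of $M_n g-g$ against $\wD^2 g$ plus moment bookkeeping for the MKZ basis --- is left entirely unexecuted. No such quantitative Voronovskaya formula with an explicit constant can simply be ``inherited'' from \cite{IvPa2012}, which only provides first-order (Jackson in $\frac1n\|\wD g\|$) information; and since the $n^{-2}$ bound is the whole point of the operator $\wM_n$, this is a genuine gap rather than a routine verification. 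The paper obtains it with no kernels or moments at all: from $M_kf-M_{k+1}f=\frac{1}{k(k+1)}\wD M_kf$ (Proposition~\ref{pr:3.1}\,(a)), $\wM_kf=M_kf-\frac1k\wD M_kf$ and the commutation $\wD M_k=M_k\wD$ on $W^2_0(\varphi)$, one gets the telescoping identity $\wM_kf-\wM_{k+1}f=-\frac{1}{k(k+1)^2}\,M_k\wD^2f$, hence $\wM_ng-g=-\sum_{k\ge n}\frac{1}{k(k+1)^2}M_k\wD^2g$, and $\|M_k\|\le1$ together with $\sum_{k\ge n}\frac{1}{k(k+1)^2}\le n^{-2}$ gives Lemma~\ref{le:5.2} with constant $1$. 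Even your own cancellation idea could be closed with these same tools (write $M_ng-g=\sum_{k\ge n}\frac{1}{k(k+1)}M_k\wD g$ and use Proposition~\ref{pr:3.1}\,(f) for $\frac1n\|\wD g-M_n\wD g\|$), but some such identity has to be supplied; your proposal stops exactly where the work begins.

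Concerning step (i), your reduction via $|u_{n,k}(h)|\le\|h\|$ to bounding $\sup_x\sum_k|\wP_{n,k}(x)|$ and a Cauchy--Schwarz step is sound and is essentially the quantity the paper controls in \eqref{eq:5.4}; your guess that $\sqrt6$ comes from an $L^2$-type argument is right. But the ``bookkeeping'' you defer is again nontrivial if done directly in the MKZ variable: writing $\wD P_{n,k}=S_{n,k}P_{n,k}$ you would need closed forms for $\sum_k S_{n,k}^2P_{n,k}$ and related sums, and the MKZ moments are awkward. The paper sidesteps this through the substitution operator $\sT$ of Section~\ref{s2}, transferring to the Baskakov basis where $\Phi_n(\alpha)=\alpha^2+2+\frac2n$ and $\mu_{n,2}=\psi/n$ are already available from \cite{GaPaUl2025}; that is precisely where $\sqrt6=\sqrt{(3+\frac2n)(1+\frac1n)}\big|_{n=2}$ comes from. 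So your step (i) is recoverable (though harder as stated), your final assembly is fine, but as it stands the proof of the second-order Jackson inequality --- the heart of the theorem --- is missing.
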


\begin{theorem} \label{th:1.2}
  For every function $f\in C[0,1)$ and $n\in\mathbb{N}$, $n\ge 17$, there exist constants $C,L>0$
  such that
  $$
    K\Big(f,\frac{1}{n^2}\Big) \le
    C\,\frac{\ell^2}{n^2}\big(\big\|\wM_n f - f \big\| + \big\|\wM_{\ell} f - f) \big\| \big).
  $$
  for all $\ell\ge Ln$.
\end{theorem}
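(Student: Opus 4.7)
The plan is to follow the classical Ditzian--Ivanov template for Type~B strong converse inequalities, which relies on inverting a Voronovskaya-type expansion for $\wM_n$. As the competing function in the infimum defining $K(f,1/n^2)$ I would take $g=\wM_\ell f$; one first verifies that $\wM_\ell f$ is admissible, i.e.\ $\wM_\ell f\in W^2_0(\varphi)[0,1)$ and $\wD\wM_\ell f\in W^2(\varphi)[0,1)$, which should follow from the smoothness of the basis $\wP_{\ell,k}$ and the boundary behaviour at $0$. Then
\[
K\!\Big(f,\tfrac{1}{n^2}\Big) \;\le\; \|f-\wM_\ell f\| \;+\; \frac{1}{n^2}\|\wD^2\wM_\ell f\|,
\]
and since $\ell\ge Ln$ implies $\ell^2/n^2\ge L^2\ge 1$, the first summand is already absorbed into $(\ell^2/n^2)\|\wM_\ell f-f\|$. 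The theorem is therefore reduced to the Bernstein--Voronovskaya-type key estimate
\[
\|\wD^2 \wM_\ell f\| \;\le\; C\,\ell^{2}\bigl(\|\wM_n f-f\|+\|\wM_\ell f-f\|\bigr).
\]

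To prove this I would establish a quantitative Voronovskaya expansion
\[
\wM_n h - h \;=\; \frac{1}{n^2}\,\wD^2 h + R_n h
\]
valid for sufficiently smooth $h$, with a controlled remainder $R_n h$. The representation $\wM_n=(I-n^{-1}\wD)M_n$, together with the known commutation $\wD M_n=M_n\wD$, shows that $\wM_n$ also commutes with $\wD$ (and hence with $\wD^2$), which keeps these expansions tractable. Setting $h=\wM_\ell f$ and rearranging gives
\[
\wD^2 \wM_\ell f \;=\; n^2\bigl(\wM_n\wM_\ell f-\wM_\ell f\bigr) \;-\; n^2 R_n(\wM_\ell f).
\]

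The main term is handled by the elementary decomposition
\[
\wM_n \wM_\ell f-\wM_\ell f \;=\; \wM_n(\wM_\ell f-f) - (\wM_\ell f-f) + (\wM_n f-f),
\]
so that uniform boundedness of $\wM_n$ (which is immediate from Theorem~\ref{th:1.1} and the trivial bound $K(f,1/n^2)\le\|f\|$) gives
\[
n^2\|\wM_n\wM_\ell f-\wM_\ell f\| \;\le\; C\,n^2\bigl(\|\wM_n f-f\|+\|\wM_\ell f-f\|\bigr) \;\le\; C\,\ell^{2}\bigl(\|\wM_n f-f\|+\|\wM_\ell f-f\|\bigr),
\]
using $n\le\ell$.

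The hard part is controlling the remainder $n^2 R_n(\wM_\ell f)$. My aim is to show, via Bernstein-type inequalities for the basis $\wP_{\ell,k}$ and the commutativity of $\wM_n$ with $\wD$, that $\|R_n(\wM_\ell f)\|$ is dominated by $\|\wD^2\wM_\ell f\|$ times a coefficient that shrinks as $\ell/n\to\infty$. Choosing $L$ large enough then makes this coefficient smaller than, say, $1/2$, so that the resulting copy of $\|\wD^2\wM_\ell f\|$ can be absorbed into the left-hand side of the key estimate. The hypotheses $n\ge 17$ and $\ell\ge Ln$ in the statement are precisely the thresholds required for this absorption argument to close and for the Voronovskaya remainder to be genuinely negligible.
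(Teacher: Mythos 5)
There is a genuine gap: you have the roles of $n$ and $\ell$ reversed in the Voronovskaya step, and this reversal makes your absorption argument fail. Your remainder comes from the expansion of $\wM_n$, so its natural bound is $\|R_n h\|\le\theta(n)\,\|\wD^3 h\|$ with $\theta(n)\asymp n^{-3}$ (this is exactly Lemma~\ref{le:6.1}). With $h=\wM_\ell f$ this gives $n^2\|R_n(\wM_\ell f)\|\lesssim n^{-1}\|\wD^3\wM_\ell f\|$, and even if you had a Bernstein-type bound $\|\wD^3\wM_\ell f\|\lesssim \ell\,\|\wD^2\wM_\ell f\|$, the resulting coefficient in front of $\|\wD^2\wM_\ell f\|$ is of order $\ell/n$, which \emph{grows} rather than shrinks as $\ell/n\to\infty$; no choice of $L$ makes it smaller than $1/2$, so the extra copy of $\|\wD^2\wM_\ell f\|$ cannot be absorbed. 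The correct orientation (the one the paper uses) is the opposite: apply the Voronovskaya inequality of the \emph{coarse} operator $\wM_\ell$, so the remainder carries $\theta(\ell)\asymp\ell^{-3}$, to a function smoothed by the \emph{fine} operator, namely $\wM_n^3 f$; then the Bernstein inequality $\|\wD\wM_n g\|\le \wC n\|g\|$ produces factors of the small parameter $n$, and the remainder term $\wC n\,\theta(\ell)\,\|\wD^2\wM_n^3 f\|$ is absorbed into $\lambda(\ell)\|\wD^2\wM_n^3 f\|\ge\frac{1}{3\ell^2}\|\wD^2\wM_n^3 f\|$ precisely when $\ell\ge Ln$ with $L=\frac{8\wC}{3}$.

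A second, related problem is that you take $g=\wM_\ell f$ as the competitor in the K-functional and freely use $\wD^2\wM_\ell f$ and $\wD^3\wM_\ell f$ for merely continuous $f$. The paper's tools do not give this: Lemma~\ref{le:6.2} controls only $\|\wD\wM_\ell f\|$ for $f\in C[0,1)$, and iterating it requires the commutation $\wD\wM_\ell h=\wM_\ell\wD h$, which is available only for $h\in W^2_0(\varphi)[0,1)$, not for $h=f$. This is exactly why the paper works with the triple iterate $\wM_n^3 f$: each extra application of the operator supplies the smoothness needed to commute $\wD$ past $\wM_n$ and apply the Bernstein inequality again (giving $\|\wD^2\wM_n^2 h\|\le\wC^2 n^2\|h\|$ and $\|\wD^3\wM_n^3 f\|\le\wC n\|\wD^2\wM_n^2 f\|$), and it simultaneously makes the competitor admissible in the K-functional, with $\|f-\wM_n^3 f\|\le 10\|f-\wM_n f\|$ by Lemma~\ref{le:5.1}. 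Your main-term decomposition and the boundedness of $\wM_n$ are fine, but without re-orienting the Voronovskaya step and smoothing with iterates of the fine operator, the proof does not close.
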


\begin{remark}
  We can also formulate the results of Theorem~\ref{th:1.1} and Theorem~\ref{th:1.2} in the manner:
  there exists a natural number $k$ such that
  $$
    K\Big(f,\frac{1}{n^2}\Big) \sim \big\|\wM_n f - f \big\| + \big\|\wM_{kn} f - f \big\|.
  $$
\end{remark}

Briefly, the paper is organized as follows. In Section~1 we give some historical notes, notations,
introduce the modified MKZ-GS operator $\wM_n$ and claim the main results. In Section~2 we consider
a particular transformation mapping functions defined on $[0,\infty)$ into functions defined on
$[0,1)$, which will help us to transfer results for the Goodman-Sharma modification of Baskakov
operator to their analogues for the modified MKZ-GS operator and vice versa. Preliminary and
auxiliary results for MKZ and Baskakov operators are presented in Section~3 and Section~4,
respectively. Section~5 includes an estimation of the norm of the operator $\wM_n$, a Jackson type
inequality and a proof of the direct inequality stated in Theorem~\ref{th:1.1}. The last Section~6
is devoted to a converse result for the modified MKZ-GS operator \eqref{eq:1.5}. Inequalities of
the Voronovskaya type and Bernstein type concerning the approximation operator $\wM_n$ and the
differential operator~$\wD$, defined in \eqref{eq:1.4}, are proved. Theorem~\ref{th:1.2} represents
a strong converse inequality. A complete proof of the converse theorem is given.


\medskip
\section{Relations between the Goodman-Sharma modification of Baskakov operator and the MKZ operator}
\label{s2}

A well-know (see \cite{To1983}, \cite{He2003a,He2003b,He2006}) and useful mapping of functions
defined on $[0,\infty)$ into functions with domain $[0,1)$ will help us to transfer our results for
the Goodman-Sharma modification of the Baskakov operator (see \cite{GaPaUl2025}) to their analogues
for the modified MKZ-GS operator.

Henceforth, if we have several operators of any kind (approximation, differential, etc.), written
consecutively in an expression, we will use the convention that this is the composition of the
operators and usually omit the brackets.

Let us consider the change of the variable $\sigma : [0,1)\to[0,\infty)$ with
$$
  \x = \sigma(x) := \frac{x}{1-x}\,.
$$
Then the inverse change of the variable $\sigma^{-1} : [0,\infty)\to [0,1)$ is given by
$$
  x = \sigma^{-1}(\x) = \frac{\x}{1+\x}\,.
$$

An operator $\sT$ transforming a function $\f:[0,\infty)\to \mathbb{R}$ into a function
$f:[0,1)\to \mathbb{R}$ is defined by
\begin{equation} \label{eq:2.1}
   \big(\sT \f\big)(x) = (1-x)\f\Big(\frac{x}{1-x}\Big) = f(x), \qquad x\in [0,1),
\end{equation}
Clearly, for the inverse operator $\sT^{-1}$ we have
\begin{equation} \label{eq:2.2}
  \big(\sT^{-1}f\big)(\x) = (1+\x)f\Big(\frac{\x}{1+\x}\Big) = \f(\x), \qquad \x\in [0,\infty).
\end{equation}

We mentioned above that $\wD=\varphi D^2$, $\varphi (x)=x(1-x)^2$, naturally associates with
the MKZ type operators. Similarly, the corresponding differential operator for the Baskakov type
operators has the form
$$
  \wcD = \psi D^2, \qquad \psi(\x) = \x(1+\x).
$$
Important relationships for the operators $\wD$, $\wcD$, $\sT$ and $\sT^{-1}$ are the following:
\begin{gather}
  \sT \wcD\f = \sT(\psi D^2\f) = \varphi D^2 \sT\f = \wD \sT\f, \label{eq:2.3} \\
  \sT^{-1} \wD f = \sT^{-1}(\varphi D^2 f) = \psi D^2 \sT^{-1} f = \wcD \sT^{-1} f. \label{eq:2.4}
\end{gather}

From \eqref{eq:2.1}--\eqref{eq:2.4} we obtain:
\begin{proposition} \label{pr:2.1}
  If $\,\w(\x)=(1+\x)^{-1}$, $\x\in [0,\infty)$, then $\sT:W^2(\w\psi)[0,\infty)\to W^2(\varphi)[0,1)$
  is a bijective mapping and there hold
  $$
    \big\|\wD \sT \f \big\|_{[0,1)} = \big\|\w \wcD\f\big\|_{[0,\infty)}\,, \qquad
    \big\|\w\wcD \sT^{-1} f \big\|_{[0,\infty)} = \|\wD f\|_{[0,1)}\,,
  $$
  i.e.
  \begin{equation} \label{eq:2.5}
    \big\|\wD f\big\|_{[0,1)} = \big\|\w \wcD\f\big\|_{[0,\infty)}\,.
  \end{equation}
\end{proposition}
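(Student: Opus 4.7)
The whole proposition is essentially a change-of-variable computation tied to the commutation relation \eqref{eq:2.3}, so the plan is to reduce everything to one clean observation about $\sT$ and then read off the two norm identities.

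First, I would observe the key algebraic fact that under the substitution $\x=\sigma(x)=x/(1-x)$ one has $1+\x=1/(1-x)$, hence
$$
  1-x \;=\; \frac{1}{1+\sigma(x)} \;=\; \w(\sigma(x)).
$$
In particular, for any function $g:[0,\infty)\to\mathbb R$, the definition \eqref{eq:2.1} yields
$$
  (\sT g)(x) \;=\; (1-x)\,g(\sigma(x)) \;=\; \w(\sigma(x))\,g(\sigma(x)).
$$
Since $\sigma$ is a bijection of $[0,1)$ onto $[0,\infty)$, taking suprema on both sides gives the general identity $\|\sT g\|_{[0,1)}=\|\w g\|_{[0,\infty)}$.

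Next, I would apply this with $g=\wcD\f$ and invoke the commutation relation \eqref{eq:2.3}: that gives
$$
  \|\wD\sT\f\|_{[0,1)} \;=\; \|\sT\wcD\f\|_{[0,1)} \;=\; \|\w\wcD\f\|_{[0,\infty)},
$$
which is the first stated equality. The second equality follows by the same scheme: writing $\f=\sT^{-1}f$ and using \eqref{eq:2.4} in place of \eqref{eq:2.3}, one gets
$$
  \|\w\wcD\sT^{-1}f\|_{[0,\infty)} \;=\; \|\sT(\wcD\sT^{-1}f)\|_{[0,1)} \;=\; \|\wD f\|_{[0,1)}.
$$
Combining the two yields \eqref{eq:2.5}.

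It remains to promote this to the bijectivity statement $\sT:W^2(\w\psi)[0,\infty)\to W^2(\varphi)[0,1)$. Since $\sigma$ is a $C^\infty$ diffeomorphism of $(0,1)$ onto $(0,\infty)$, the local absolute continuity of $\f$ and $\f'$ on $(0,\infty)$ is equivalent to that of $\sT\f$ and $(\sT\f)'$ on $(0,1)$ (the correspondence is carried out via the product and chain rules on any compact subinterval). The remaining essential-boundedness condition $\w\wcD\f\in L_\infty[0,\infty)$ translates, by the norm identity just proved, exactly to $\wD\sT\f\in L_\infty[0,1)$. Hence $\sT$ restricts to a bijection between the two weighted spaces. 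I expect no real obstacle here; the only point requiring slight care is checking that the $AC_{loc}$ conditions are indeed preserved under $\sigma$, which is a routine verification from the smoothness of the change of variables on any $[a,b]\subset(0,1)$.
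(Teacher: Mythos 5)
Your proof is correct and follows exactly the route the paper intends: the paper gives no separate argument for Proposition~\ref{pr:2.1}, stating only that it follows from \eqref{eq:2.1}--\eqref{eq:2.4}, and your observation that $(\sT g)(x)=\w(\sigma(x))g(\sigma(x))$ combined with the commutation relations \eqref{eq:2.3}--\eqref{eq:2.4} is precisely that derivation, made explicit. The added check that the $AC_{loc}$ conditions transfer under the diffeomorphism $\sigma$ is a fine (routine) completion of the bijectivity claim.
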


The classical Baskakov operator \cite{Ba1957} is defined by
\begin{equation} \label{eq:2.6}
  \cB_n(\f,\x) = \sum_{k=0}^{\infty} \f\Big(\frac{k}{n}\Big)\,\cP_{n,k}(\x), \qquad
  \cP_{n,k}(\x) = \binom{n+k-1}{k}\x^k(1+\x)^{-n-k},
\end{equation}
for $\x\in [0,\infty)$.

The following statement gives a relation between the MKZ operator $M_n^{[MKZ]}$ and the Baskakov
operator $\cB_n$.

\begin{proposition}[{{\cite[Proposition~7]{IvPa2012}}}] \label{pr:2.2}
  For every function $f$ such that one of the series (representing $M_n^{[MKZ]} f$ and
  $\cB_n \f$) in \eqref{eq:2.7} is convergent and for every $n\in\mathbb{N}$ we have
  \begin{equation} \label{eq:2.7}
    M_n^{[MKZ]}(f,x) 
                     = (1-x)\cB_n\Big(\f,\frac{x}{1-x}\Big), \qquad x\in[0,1).
  \end{equation}
\end{proposition}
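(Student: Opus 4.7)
The plan is to verify the identity by a straightforward direct computation, substituting $\x = x/(1-x)$ into the definition of $\cB_n(\f,\x)$ and then simplifying term by term until the series matches the definition of $M_n^{[MKZ]}(f,x)$. Since $\f = \sT^{-1}f$, the key ingredient supplied by \eqref{eq:2.2} is
$$
  \f\!\left(\frac{k}{n}\right) = \Big(1+\frac{k}{n}\Big)\,f\!\left(\frac{k/n}{1+k/n}\right) = \frac{n+k}{n}\,f\!\left(\frac{k}{n+k}\right),
$$
so that the Baskakov sampling nodes $k/n$ pair up precisely with the MKZ nodes $k/(n+k)$ after multiplication by $(n+k)/n$.

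The next step is to rewrite the Baskakov basis functions at $\x = x/(1-x)$. Using $1+\x = 1/(1-x)$, one finds
$$
  \cP_{n,k}\!\left(\frac{x}{1-x}\right) = \binom{n+k-1}{k}\Big(\frac{x}{1-x}\Big)^{k}(1-x)^{n+k} = \binom{n+k-1}{k}\,x^{k}(1-x)^{n},
$$
and multiplication by the outside factor $(1-x)$ then produces $x^{k}(1-x)^{n+1}$, which is exactly the non-binomial part of $P_{n,k}(x)$. I would collect all of this into a single displayed expression for $(1-x)\cB_n(\f,x/(1-x))$.

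What remains is a one-line binomial identity: combining the factor $(n+k)/n$ coming from $\f(k/n)$ with $\binom{n+k-1}{k}$ gives
$$
  \frac{n+k}{n}\binom{n+k-1}{k} = \frac{(n+k)!}{k!\,n!} = \binom{n+k}{k},
$$
which upgrades the Baskakov coefficient into the MKZ coefficient. Reading off the resulting series yields $M_n^{[MKZ]}(f,x)$, completing the proof.

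There is no real obstacle here; the statement is essentially a change of variables. The only point requiring attention is the accounting of the three factors of $(1+\x)$: one is absorbed by the definition of $\f = \sT^{-1}f$, the exponent $n+k$ in $\cP_{n,k}$ accounts for all but one factor after substitution, and the last factor is cancelled by the leading $(1-x)$. Convergence is not an issue since the proposition is stated under the hypothesis that one (hence, after the verified equality, both) of the series converges.
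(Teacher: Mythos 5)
Your computation is correct: the substitution $\x=x/(1-x)$, the identity $\f(k/n)=\frac{n+k}{n}f\big(\frac{k}{n+k}\big)$ coming from \eqref{eq:2.2}, and the binomial step $\frac{n+k}{n}\binom{n+k-1}{k}=\binom{n+k}{k}$ together give exactly \eqref{eq:2.7}. The paper itself offers no proof of this proposition --- it is quoted from \cite[Proposition~7]{IvPa2012} --- and your direct change-of-variables verification is the standard argument one would expect there, so there is nothing to compare beyond noting that your write-up is a complete, self-contained substitute for the citation.
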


Finta \cite{Fi2005} introduced the operator
\begin{equation} \label{eq:2.8}
  \begin{gathered}
    V_n(\f,\x) = \sum _{k=0}^{\infty} \,v_{n,k}(\f)\cP_{n,k}(\x), \qquad \x\in[0,\infty), \\
    v_{n,0}(\f) = \f(0), \qquad v_{n,k}(\f) = (n+1)\int_0^{\infty} \cP_{n+2,k-1}(t)\f(t)\,dt,
  \end{gathered}
\end{equation}
where $\cP_{n,k}$ are the Baskakov basis functions given in \eqref{eq:2.6}. The modification $V_n \f$
of the Baskakov operator is similar to the Goodman-Sharma type variant of the classical Bernstein
operator. Then the operators \eqref{eq:1.3} and \eqref{eq:2.8} satisfy the next property.

\begin{proposition}[{{\cite[Proposition~8]{IvPa2012}}}] \label{pr:2.3}
  For every integrable function $f$ such that one of the series (representing $M_n f$ and $V_n \f$)
  in \eqref{eq:2.9} is convergent and for every $n\in\mathbb{N}$ we have
  \begin{equation} \label{eq:2.9}
    M_n(f,x) 
             = (1-x)V_n\Big(\f,\frac{x}{1-x}\Big), \qquad x\in[0,1).
  \end{equation}
\end{proposition}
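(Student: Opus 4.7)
The plan is to mimic the structure of Proposition~\ref{pr:2.2}, whose proof relies on a pointwise identity between the MKZ and Baskakov basis functions under $\x = x/(1-x)$; here we additionally need to track how the functionals $u_{n,k}$ and $v_{n,k}$ are related under this change of variable.

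First I would record the key identity
\[
  P_{n,k}\!\Big(\frac{\x}{1+\x}\Big) = \binom{n+k}{k}\x^k(1+\x)^{-n-k-1} = \cP_{n+1,k}(\x),
\]
which follows directly from the definitions in \eqref{eq:1.1} and \eqref{eq:2.6} once $1-x$ is rewritten as $(1+\x)^{-1}$. Next I would handle the $k=0$ term by hand: since $(\sT\f)(0)=\f(0)$ and $P_{n,0}(x)=(1-x)^{n+1}=(1-x)\cP_{n,0}(\x)$, the contribution $f(0)P_{n,0}(x)$ equals $(1-x)\f(0)\cP_{n,0}(\x)$, matching the $k=0$ piece of the right-hand side of \eqref{eq:2.9}.

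The main step is to express $u_{n,k}(f)$ in terms of $v_{n,k}(\f)$ for $k\ge 1$. I would perform the substitution $t=\tau/(1+\tau)$ in the integral defining $u_{n,k}(f)$; then $dt = (1+\tau)^{-2}d\tau$, so the factor $(1-t)^{-2}$ cancels against the Jacobian. Using $f(t) = (1+\tau)^{-1}\f(\tau)$ from \eqref{eq:2.1} and the analogue of the basis identity
\[
  P_{n,k-1}\!\Big(\frac{\tau}{1+\tau}\Big) = \binom{n+k-1}{k-1}\tau^{k-1}(1+\tau)^{-n-k},
\]
a short computation yields
\[
  u_{n,k}(f) = n\int_0^{\infty}\binom{n+k-1}{k-1}\tau^{k-1}(1+\tau)^{-n-k-1}\f(\tau)\,d\tau
             = \frac{n}{n+k}\,v_{n,k}(\f),
\]
the last equality coming from $(n+1)\binom{n+k}{k-1} = \frac{n+k}{n}\,n\binom{n+k-1}{k-1}$ applied to \eqref{eq:2.8}.

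Finally I would assemble the pieces. Substituting the basis identity and the relation $u_{n,k}(f)=\frac{n}{n+k}v_{n,k}(\f)$ into \eqref{eq:1.3}, the sum for $k\ge 1$ becomes
\[
  \sum_{k=1}^{\infty}\frac{n}{n+k}\,v_{n,k}(\f)\binom{n+k}{k}\x^k(1+\x)^{-n-k-1}
  = \frac{1}{1+\x}\sum_{k=1}^{\infty} v_{n,k}(\f)\,\cP_{n,k}(\x),
\]
after using the elementary identity $\frac{n}{n+k}\binom{n+k}{k} = \binom{n+k-1}{k}$. Combining with the $k=0$ term and recalling $(1+\x)^{-1}=1-x$ gives precisely $(1-x)V_n(\f,\x)$, which is \eqref{eq:2.9}. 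The only place where one has to be careful is the bookkeeping of the binomial coefficients and the extra factor $1-x$ versus the shift from $\cP_{n+1,k}$ back to $\cP_{n,k}$; once that algebra is straight, everything else is routine.
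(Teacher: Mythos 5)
Your computation is correct: the key relation $u_{n,k}(f)=\frac{n}{n+k}\,v_{n,k}(\f)$ (obtained from the substitution $t=\tau/(1+\tau)$, which cancels $(1-t)^{-2}$ against the Jacobian) together with $P_{n,k}\big(\tfrac{\x}{1+\x}\big)=\cP_{n+1,k}(\x)$ and $\frac{n}{n+k}\binom{n+k}{k}=\binom{n+k-1}{k}$ gives exactly the termwise identity $u_{n,k}(f)P_{n,k}(x)=(1-x)v_{n,k}(\f)\cP_{n,k}(\x)$, so both series converge simultaneously and \eqref{eq:2.9} follows. The paper itself does not prove this statement but cites \cite[Proposition~8]{IvPa2012}, and your change-of-variables verification is the standard argument given there, so there is nothing substantive to compare beyond noting that your bookkeeping of the binomial coefficients and the factor $1-x$ is accurate.
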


In \cite{GaPaUl2025} the authors introduced and investigated the approximation properties of the
modified Baskakov-GS operator
\begin{equation} \label{eq:2.10}
  \begin{gathered}
    \wV_n(\f,\x) = \sum_{k=0}^{\infty} v_{n,k}(\f)\wcP_{n,k}(\x), \qquad \x\in[0,\infty), \\
    \wcP_{n,k}(\x) = \cP_{n,k}(\x) - \frac{1}{n}\,\wcD P_{n,k}(\x).
  \end{gathered}
\end{equation}
It was proved in \cite[Theorem~1]{GaPaUl2025} that the modified Baskakov-GS operator $\wV_n$ has
second order rate of approximation for continuous functions, compared with the first order for the
Finta's operator \eqref{eq:2.8}.

A similar relationship between the MKZ operator $M_n^{[MKZ]}$ defined in \eqref{eq:1.1} and the
Baskakov operator $\cB_n$ as in Proposition~\ref{pr:2.2} holds for the modified MKZ-GS operator
$\wM_n$ and the modified Baskakov-GS operator $\wV_n$. The statement is an immediate consequence
from \eqref{eq:1.5} and \eqref{eq:2.9}--\eqref{eq:2.10}.

\begin{proposition} \label{pr:2.4}
  For every integrable function $f$ such that one of the series (representing $\wM_n f$ and
  $\wV_n \f$) in \eqref{eq:2.11} is convergent and for every $n\in\mathbb{N}$ we have
  \begin{equation} \label{eq:2.11}
    \wM_n(f,x) 
               = (1-x)\wV_n\Big(\f,\frac{x}{1-x}\Big), \qquad x\in[0,1).
  \end{equation}
\end{proposition}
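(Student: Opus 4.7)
The plan is to reduce Proposition~\ref{pr:2.4} to Proposition~\ref{pr:2.3} by pulling out the auxiliary $\wD$/$\wcD$ terms that appear in the modified basis functions. First, I would observe that since the coefficients $u_{n,k}(f)$ (resp.\ $v_{n,k}(\f)$) do not depend on $x$ (resp.\ $\x$), the definitions \eqref{eq:1.5} and \eqref{eq:2.10} yield the clean identities
\begin{equation*}
  \wM_n f = M_n f - \frac{1}{n}\,\wD M_n f, \qquad
  \wV_n \f = V_n \f - \frac{1}{n}\,\wcD V_n \f,
\end{equation*}
provided the series converge so that termwise differentiation is justified (which is included in the convergence hypothesis of the statement).

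Next, I would apply Proposition~\ref{pr:2.3}, which in the notation of \eqref{eq:2.1} reads $M_n f = \sT(V_n \f)$. Substituting this into the first identity above gives
\begin{equation*}
  \wM_n f = \sT(V_n \f) - \frac{1}{n}\,\wD \sT(V_n \f).
\end{equation*}
The key step is then to invoke the commutation relation \eqref{eq:2.3}, $\wD \sT = \sT \wcD$, which turns the second term into $\frac{1}{n}\sT(\wcD V_n \f)$. Factoring out the linear operator $\sT$ yields
\begin{equation*}
  \wM_n f = \sT\!\left(V_n \f - \frac{1}{n}\,\wcD V_n \f\right) = \sT(\wV_n \f),
\end{equation*}
which is precisely \eqref{eq:2.11} after unpacking the definition of $\sT$.

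There is no real obstacle here beyond bookkeeping: both summands in $\wM_n f$ and $\wV_n \f$ are obtained by applying a linear differential operator ($\wD$ or $\wcD$) to an already-established MKZ/Baskakov correspondence, and \eqref{eq:2.3} handles exactly that conjugation. The only point to be careful about is justifying termwise application of $\wD$ to the series for $M_n f$ (and similarly for $\wcD$ to $V_n \f$), but this is guaranteed by the standing assumption on convergence of the series in the statement, and is also the reason the hypothesis is phrased that way. Hence the proposition follows as claimed.
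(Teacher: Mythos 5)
Your argument is correct and is essentially the paper's own (the paper simply states that \eqref{eq:2.11} is an immediate consequence of \eqref{eq:1.5} and \eqref{eq:2.9}--\eqref{eq:2.10}): you reduce the modified correspondence to Proposition~\ref{pr:2.3} via $\wM_n f = M_n f - \tfrac{1}{n}\wD M_n f$, $\wV_n \f = V_n \f - \tfrac{1}{n}\wcD V_n \f$ and the conjugation identity \eqref{eq:2.3}, which is exactly the intended route. The termwise manipulation you flag is handled at the same level of rigor as in the paper (cf.\ the proof of Proposition~\ref{pr:3.2}\,(a)), so there is nothing missing.
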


Combining Propositions~\ref{pr:2.1}--\,\ref{pr:2.4} we summarize as follows.
\begin{proposition} \label{pr:2.5}
  Let $\w(\x)=(1+\x)^{-1}$, $\x \in [0,\infty)$. For every integrable function $f$ on $[0,1)$ such
  that one of the series (representing $\wM_n f$ and $\wV_n \f$) in \eqref{eq:2.11} is convergent,
  for the corresponding from \eqref{eq:2.1}--\eqref{eq:2.2} function $\f$ and for every
  $n\in\mathbb{N}$ we have
  \begin{align} \label{eq:2.12}
    \|f\|_{[0,1)} & = \|\w \f\|_{[0,\infty)}, \\
    \big\|\wM_nf\big\|_{[0,1)} & = \big\|\w \wV_n\f\big\|_{[0,\infty)}, \label{eq:2.13} \\
    \big\|\wD \wM_nf\big\|_{[0,1)} & = \big\|\w \wcD \wV_n\f\big\|_{[0,\infty)}. \label{eq:2.14}
  \end{align}
\end{proposition}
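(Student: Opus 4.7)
The statement is essentially a bookkeeping consequence of Propositions~\ref{pr:2.1}--\ref{pr:2.4}, so my plan is to prove the three identities in order, each time reducing to an already-established formula.

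First, for \eqref{eq:2.12}, I would start from the defining relation \eqref{eq:2.1}. Writing $\x=\sigma(x)=x/(1-x)$ gives $1-x=(1+\x)^{-1}=\w(\x)$, so $(\sT\f)(x)=\w(\x)\f(\x)$. Since $\sigma:[0,1)\to[0,\infty)$ is a bijection, the substitution $x\mapsto\x$ preserves the essential supremum, and hence $\|f\|_{[0,1)}=\|\sT\f\|_{[0,1)}=\|\w\f\|_{[0,\infty)}$.

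For \eqref{eq:2.13}, I would invoke Proposition~\ref{pr:2.4}, which says precisely that $\wM_n f=\sT(\wV_n\f)$ (read \eqref{eq:2.11} in the form $(\wM_n f)(x)=(1-x)(\wV_n\f)(\sigma(x))$). Then the identity \eqref{eq:2.13} follows immediately by applying \eqref{eq:2.12} with the pair $(\wM_n f,\wV_n\f)$ in place of $(f,\f)$. The only thing to verify here is that the integrability/convergence hypothesis carries over so that $\wV_n\f$ is well defined, and this is given by the assumption that one of the two series converges.

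For \eqref{eq:2.14}, I would apply Proposition~\ref{pr:2.1}, specifically the norm identity \eqref{eq:2.5}, to the function $\wM_n f=\sT(\wV_n\f)$. Since $\sT^{-1}(\wM_n f)=\wV_n\f$, equation \eqref{eq:2.5} gives at once
\[
  \bigl\|\wD\wM_n f\bigr\|_{[0,1)}
  =\bigl\|\w\,\wcD\,\sT^{-1}(\wM_n f)\bigr\|_{[0,\infty)}
  =\bigl\|\w\,\wcD\,\wV_n\f\bigr\|_{[0,\infty)},
\]
which is \eqref{eq:2.14}. The only small subtlety is checking that $\wV_n\f$ lies in the domain $W^2(\w\psi)[0,\infty)$ on which Proposition~\ref{pr:2.1} is formulated; this is not really an obstacle, since the smoothness of the basis polynomials $\wcP_{n,k}$ makes $\wV_n\f$ as regular as needed, and the chain of identities \eqref{eq:2.3}--\eqref{eq:2.4} already guarantees that $\wD$ and $\wcD$ intertwine correctly under $\sT$. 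With these three steps the proposition follows by simple composition, so I do not expect any genuinely hard step—the bulk of the work has been done in the previous propositions.
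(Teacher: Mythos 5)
Your argument is correct and is exactly what the paper intends: Proposition~\ref{pr:2.5} is stated there as an immediate combination of Propositions~\ref{pr:2.1}--\ref{pr:2.4}, with no further proof given, and your three steps (the substitution $\x=\sigma(x)$ with $1-x=\w(\x)$ for \eqref{eq:2.12}, Proposition~\ref{pr:2.4} plus \eqref{eq:2.12} for \eqref{eq:2.13}, and \eqref{eq:2.5} applied to $\wM_n f=\sT(\wV_n\f)$ for \eqref{eq:2.14}) simply spell that combination out. No discrepancies with the paper's approach.
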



\medskip
\section{Auxiliary Results for the modified MKZ-GS operator} \label{s3}

We start this section with some properties of the MKZ-GS operator $M_n$ from \cite{IvPa2012}:

\begin{itemize}
  \item $M_n$ is a linear positive operator;
  \item $M_n(e_0,x)=e_0(x)$, $M_n(e_1,x)=e_1(x)$, where $e_k(t)=t^k$, $k=0,1$;
  \item $M_n f \le f$ for every concave continuous function $f$.
\end{itemize}

\smallskip
Setting $\alpha_0=\alpha_1=0$ in \cite[eq. (4)]{IvPa2012} we get partial cases of
Propositions~9--14 in \cite{IvPa2012} listed below, respectively.

\begin{proposition} \label{pr:3.1}
  \!
  \begin{enumerate}
    \item[(a)] If $\,f\in C[0,1)$ and $n\in\mathbb{N}$, then
          $$
            M_n(f,x) - M_{n+1}(f,x) = \frac{1}{n(n+1)}\,\wD M_n(f,x), \qquad x\in[0,1).
          $$

    \item[(b)] For every $g\in W^2_0(\varphi)[0,1)$ and $n\in\mathbb{N}$ we have
          $$
            \wD M_n(g,x) = M_n(\wD g)(x), \qquad x\in[0,1),
          $$
          i.e. $M_n$ commutes with the operator $\varphi D^2$ on $W^2_0(\varphi)[0,1)$.

    \smallskip
    \item[(c)] Let $m,n\in\mathbb{N}$. Then for every $f\in C[0,1)$ we have $M_n M_m f=M_m M_n f$, i.e.
          $M_m$ and $M_n$ commute on $C[0,1)$.

    \smallskip
    \item[(d)] For every $f\in C[0,1)$ and $n\in\mathbb{N}$ we have $\|M_n f\|\le \|f\|$,
          i.e. $\|M_n\|=1$ holds for the norm of the operator $M_n$ on $C[0,1)$.

    \smallskip
    \item[(e)] For every $g\in W^2(\varphi)[0,1)$ and $n\in\mathbb{N}$ we have
          $\|\wD M_n g\|\le \|\wD g\|$.

    \smallskip
    \item[(f)] {\em Jackson-type inequality:} If $\,g\in W^2(\varphi)[0,1)$ and $n\in \mathbb{N}$ we have
          $$
            \| M_n g - g\| \le \frac{1}{n}\,\|\wD g\|.
          $$
  \end{enumerate}
\end{proposition}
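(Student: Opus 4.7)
The statement is explicitly flagged as the $\alpha_0=\alpha_1=0$ specialization of Propositions~9--14 in Ivanov--Parvanov~\cite{IvPa2012}. The most efficient plan is therefore to walk through those six general formulas, substitute $\alpha_0=\alpha_1=0$ into each one, and verify that no auxiliary hypothesis is lost under the specialization. Once this is done, items (a)--(f) follow by direct quotation rather than by reproving anything, and this is presumably the strategy the authors have in mind.

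If one prefers a self-contained argument in the present setting, the six items split naturally into a computational core, namely (a), from which the remaining statements cascade. I would first verify (a) by computing $\wD P_{n,k}$ explicitly from $P_{n,k}(x)=\binom{n+k}{k}x^k(1-x)^{n+1}$ and $\wD=x(1-x)^2 D^2$, and then matching with $P_{n,k}-P_{n+1,k}$ term by term, reconciling the functionals $u_{n,k}$ with $u_{n+1,k}$ on the relevant basis functions (the case $k=0$ being handled by the special value $u_{n,0}(f)=f(0)$). Item (d) is independent: positivity of $M_n$ together with $M_n e_0=e_0$ gives $\|M_n f\|\le\|f\|$ with equality on constants, hence $\|M_n\|=1$. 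Item (b) then follows either by iterating (a) or by expressing $\wD P_{n,k}$ as an explicit linear combination of neighbouring basis functions and recognizing the shift as an application of $M_n$ to $\wD g$; combining (b) with (d) applied to $\wD g$ yields (e). For (c), I would write $M_n$ as an integral operator with explicit kernel and use the symmetry of the Baskakov--Durrmeyer kernel; equivalently, transfer the statement to $[0,\infty)$ via Proposition~\ref{pr:2.3} and invoke the known commutation $V_n V_m=V_m V_n$ for the Finta-type operator. Finally, (f) is the standard Jackson bound obtained from the error representation as an integral against $\wD g$, exploiting that $M_n$ reproduces $e_0$ and $e_1$ so that only the second-order remainder survives.

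The main obstacle is not depth but bookkeeping. For (b) and (e) one must justify termwise application of the second-order operator $\wD$ to the infinite series defining $M_n g$ and handle the boundary condition $\lim_{x\to 0^+}\wD g(x)=0$ built into $W^2_0(\varphi)[0,1)$; for the Jackson estimate (f) the integration by parts used on each summand leaves boundary terms at $t\to 1^-$ that must be shown to vanish using the $(1-t)^{n+1}$ decay of $P_{n,k-1}(t)$ together with the $(1-t)^{-2}$ weight. These are precisely the technical points that are dispatched once for all in \cite{IvPa2012}, which is why quoting their propositions with the parameters set to zero is the cleanest route and the one I would ultimately adopt in the writeup.
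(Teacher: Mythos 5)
Your primary strategy coincides with the paper's: Proposition~\ref{pr:3.1} is not proved there at all, but obtained exactly as you propose, by setting $\alpha_0=\alpha_1=0$ in the weights of \cite[eq.~(4)]{IvPa2012} and quoting Propositions~9--14 of that paper verbatim. Your supplementary self-contained sketch is a reasonable outline of how those results are established, but it is extra material the paper itself does not carry out.
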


Now, we give properties of the operators $M_n$, $\wM_n$ and the differential operator $\wD$.

\begin{proposition} \label{pr:3.2}
  If the operators $M_n$, $\wM_n$ and the differential operator $\wD$ are defined as in
  \eqref{eq:1.3}, \eqref{eq:1.5} and \eqref{eq:1.4}, respectively, then

  \smallskip
  \begin{enumerate}
    \item[(a)] $\wM_n f = M_n\big(f-\frac{1}{n}\,\wD f\big)$, for $f\in W^2_0(\varphi)[0,1)$;

    \smallskip
    \item[(b)] $\wD \wM_n f = \wM_n \wD f$, for $f\in W^2_0(\varphi)[0,1)$;

    \smallskip
    \item[(c)] $M_n \wM_n f = \wM_n M_n f$, for $f\in W^2_0(\varphi)[0,1)$;

    \smallskip
    \item[(d)] $\wM_m \wM_n f = \wM_n \wM_m f$, for $f\in W^2_0(\varphi)[0,1)$;

    \smallskip
    \item[(e)] $\lim\limits_{n\to \infty} \wM_n f = f$, for $f\in W^2(\varphi)[0,1)$.
  \end{enumerate}
\end{proposition}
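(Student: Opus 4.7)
The plan is to start from the observation that, regardless of $f$, the definition \eqref{eq:1.5} gives the termwise identity
\begin{equation*}
\wM_n f \;=\; \sum_{k=0}^{\infty} u_{n,k}(f)P_{n,k} \;-\; \frac{1}{n}\sum_{k=0}^{\infty} u_{n,k}(f)\wD P_{n,k}
\;=\; M_n f - \frac{1}{n}\,\wD M_n f,
\end{equation*}
after pulling the second-order operator $\wD=\varphi D^2$ past the series (this is justified because $M_n f$ is an absolutely convergent series of polynomials on compact subsets of $[0,1)$, so differentiating twice termwise is permitted). This single identity is the engine that drives all five items; every other ingredient is a commutation property already supplied by Proposition~\ref{pr:3.1}.

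For \textbf{(a)}, I would apply Proposition~\ref{pr:3.1}(b) to the assumed $f\in W^2_0(\varphi)[0,1)$ to replace $\wD M_n f$ by $M_n\wD f$, and then factor $M_n$ using linearity. For \textbf{(b)}, applying $\wD$ to the formula from (a) produces $\wD M_n f - \tfrac{1}{n}\wD M_n\wD f$; invoking Proposition~\ref{pr:3.1}(b) once for $f$ and once for $\wD f$ converts this into $M_n\wD f-\tfrac{1}{n}M_n\wD^2 f$, which is exactly $\wM_n\wD f$ by (a). (Part (b) implicitly needs $\wD f\in W^2_0(\varphi)[0,1)$ as well; this is the standard tacit assumption in this circle of identities.) For \textbf{(c)}, I would compose $M_n$ with the formula in (a), use the commutation $M_n M_n=M_nM_n$ trivially, and then note that Proposition~\ref{pr:3.1}(b) lets me push $\wD$ through the inner $M_n$ in the expression for $\wM_n M_n f$; the two sides become identical.

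For \textbf{(d)}, the calculation is a chain: expand $\wM_m\wM_n f$ via (a), use part (b) to move $\wD$ across $\wM_n$, then use the obvious generalization of (c) (with indices $m\ne n$, proved the same way using Proposition~\ref{pr:3.1}(b),(c)) to swap $M_m$ and $\wM_n$, and finally repack with (a). The bookkeeping requires that each intermediate function lie in $W^2_0(\varphi)[0,1)$, and this is the main obstacle of the section: one needs to verify, using the explicit smoothness of the basis functions $P_{n,k}$ and of the image $\wM_n f$ near the boundary, that the hypotheses of (a)--(c) remain satisfied when they are iterated. Once this is noted, the algebra collapses symmetrically in $m$ and $n$.

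Finally, for \textbf{(e)}, I would bypass (a) entirely (since only $f\in W^2(\varphi)[0,1)$ is assumed) and estimate directly from the identity $\wM_n f = M_n f - \tfrac{1}{n}\wD M_n f$:
\begin{equation*}
\big\|\wM_n f - f\big\| \;\le\; \|M_n f - f\| + \frac{1}{n}\,\|\wD M_n f\|.
\end{equation*}
The first term tends to $0$ by the Jackson-type estimate of Proposition~\ref{pr:3.1}(f), which gives $\|M_n f-f\|\le \tfrac{1}{n}\|\wD f\|$, and the second term tends to $0$ by the Bernstein-type bound of Proposition~\ref{pr:3.1}(e), which gives $\|\wD M_n f\|\le \|\wD f\|$. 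Both use only $f\in W^2(\varphi)[0,1)$, so the conclusion follows.
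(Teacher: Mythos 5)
Your proposal is correct and follows essentially the same route as the paper: the termwise identity $\wM_n f = M_n f - \tfrac{1}{n}\,\wD M_n f$ combined with the commutation facts of Proposition~\ref{pr:3.1}(b)--(c) drives (a)--(d), and (e) is the same $O(1/n)$ estimate via Proposition~\ref{pr:3.1}(e)--(f). Your handling of (e), keeping the term $\wD M_n f$ and bounding it by Proposition~\ref{pr:3.1}(e) rather than first commuting to $M_n\wD f$, is a minor (and slightly cleaner) variation that avoids invoking the boundary condition, but the argument is the same in substance.
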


\begin{proof}
We have
\begin{align*}
  \wM_{n} f & = \sum_{k=0}^{\infty} u_{n,k}(f)\wP_{n,k}
              = \sum_{k=0}^{\infty} u_{n,k}(f)\Big(P_{n,k} - \frac{1}{n}\,\wD P_{n,k}\Big) \\
            & = \sum_{k=0}^{\infty} u_{n,k}(f) P_{n,k} - \frac{\varphi}{n}\sum_{k=0}^{\infty} u_{n,k}(f)D^2P_{n,k}
              = M_n f - \frac{1}{n}\,\varphi D^2M_n f \\
            & = M_n f - \frac{1}{n}\,\wD M_n f.
\end{align*}
Then from Proposition~\ref{pr:3.1}\,(b) we obtain
$$
  \wM_n f = M_n f - \frac{1}{n}\,\wD M_n f = M_n f - \frac{1}{n}\,M_n \wD f
          = M_n\Big(f-\frac{1}{n}\,\wD f\Big),
$$
which proves (a).

Now, commutative properties (b) and (c) follow from (a) and Proposition~\ref{pr:3.1}\,(b):
\begin{align*}
  \wD \wM_n f  & = \wD M_n\Big(f-\frac{1}{n}\,\wD f\Big) = M_n\Big(\wD f - \frac{1}{n}\,\wD \wD f\Big)
                           = \wM_n(\wD f), \\
  M_n \wM_n f & = M_n\Big(M_nf-\frac{1}{n}\,\wD M_n f \Big) = M_n M_n f - \frac{1}{n}\,M_n \wD M_n f \\
                        & = M_n M_n f - \frac{1}{n}\,\wD M_n M_n f = M_n (M_n f)-\frac{1}{n}\,\wD M_n (M_n f)
                            = \wM_n M_n f.
\end{align*}

The modified MKZ-GS operators commute in the sense of (d) since
\begin{align*}
  \wM_m \wM_n f & = \wM_m \Big(M_n f - \frac{1}{n}\,\wD M_n f\Big) \\
                & = M_m \Big(M_n f - \frac{1}{n}\,\wD M_n f\Big)
                    -\frac{1}{m} \wD M_m\Big(M_n f -\frac{1}{n}\,\wD M_n f\Big) \\
                & = M_m M_n \Big(f - \frac{m+n}{mn}\,\wD f + \frac{1}{mn}\,\wD^2 f\Big).
\end{align*}
The same expression in the last line we obtain for $\wM_n \wM_m f$ because of property (a) and
Proposition~\ref{pr:3.1}\,(b)--(c).

Applying Proposition~\ref{pr:3.1}\,(b),\,(e)--(f) we have
$$
  \|\wM_n f - f\| = \Big\|M_n f  -\frac{1}{n}\,M_n \wD f - f\Big\|
                   \le \|M_n f - f\| + \frac{1}{n}\,\big\|M_n \wD f\big\|
                   \le \frac{2}{n}\,\|\wD f\|.
$$
Therefore $\lim\limits_{n\to \infty} \|\wM_n f - f\|=0$, i.e. property (e) holds true.
\end{proof}


\medskip
\section{Auxiliary Results for the Baskakov Basis Functions $\cP_{n,k}$} \label{s4}

Let $\x\in [0,\infty)$. It is easy to verify that the Baskakov basis functions $\cP_{n,k}$ satisfy
\begin{equation} \label{eq:4.1}
    k \cP_{n,k}(\x) = n\x\,\cP_{n+1,k-1}(\x), \qquad (n+k)\cP_{n,k}(\x) = n(1+\x)\cP_{n+1,k}(\x),
\end{equation}
setting $\cP_{n,k}:=0$ if $k<0$. The central moments of the Baskakov operator $\cB_n$ are
$$
  \mu_{n,j}(\x) = \cB_n\big((t-\x)^j,\x\big)
               = \sum_{k=0}^{\infty} \Big(\frac{k}{n}-\x\Big)^j \cP_{n,k}(\x), \qquad j=0,1,\ldots\,,
$$
with
\begin{equation} \label{eq:4.2}
  \mu_{n,0}(\x) = 1, \quad \mu_{n,1}(\x) = 0, \quad
  \mu_{n,2}(\x) = \frac{\psi(\x)}{n}, \qquad \mu_{n,3}(\x) = \frac{(1+2\x)\psi(\x)}{n^2}.
\end{equation}

Now we introduce a function that relates $\wcD \cP_{n,k}$ and $\cP_{n,k}$, namely
\begin{align} \label{eq:4.3}
  T_{n,k}(\x) & := k(k-1)\frac{1+\x}{\x} - 2k(n+k) + (n+k)(n+k+1)\frac{\x}{1+\x} \\
              & = n\bigg[-1-\frac{1+2\x}{\psi(\x)}\Big(\frac{k}{n}-\x\Big) + \frac{n}{\psi(\x)}\Big(\frac{k}{n}-\x\Big)^{\!2}\bigg].
                \notag
\end{align}
For its derivatives one can see that
\begin{align}
  T_{n,k}'(\x)  & = - \frac{k(k-1)}{\x^2} + \frac{(n+k)(n+k+1)}{(1+\x)^2}, \label{eq:4.4} \\
  T_{n,k}''(\x) & = \frac{2k(k-1)}{\x^3} - \frac{2(n+k)(n+k+1)}{(1+\x)^3}. \label{eq:4.5}
\end{align}

Below are a few more properties of the function $T_{n,k}(\x)$ that will be useful later on.

\begin{proposition}[{\cite[Proposition~2.3]{GaPaUl2025}}] \label{pr:4.1}

  \smallskip
  \! Let $\x\in [0,\infty)$.
  \begin{enumerate}
    \item[(a)] The following relation concerning functions $\cP_{n,k}$, $T_{n,k}$ and differential operator
                  $\wcD$ are valid:
                  \begin{equation} \label{eq:4.6}
                    \wcD \cP_{n,k}(\x) = T_{n,k}(\x) \cP_{n,k}(\x).
                 \end{equation}

    \smallskip
    \item[(b)] If $\,\alpha$ is an arbitrary real number, then
                  $$
                      \Phi_n(\alpha) := \sum_{k=0}^{\infty} \Big(\alpha - \frac{1}{n}\,T_{n,k}(\x)\!\Big)^2 \cP_{n,k}(\x)
                                              = \alpha^2 + 2 + \frac{2}{n}\,.
                  $$
  \end{enumerate}
\end{proposition}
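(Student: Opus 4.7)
The plan is to prove (a) by direct computation based on logarithmic differentiation of $\cP_{n,k}(\x) = \binom{n+k-1}{k}\x^k(1+\x)^{-n-k}$. Since $(\log \cP_{n,k})'(\x) = k/\x - (n+k)/(1+\x)$, one has $D\cP_{n,k}(\x) = \cP_{n,k}(\x)\bigl[k/\x - (n+k)/(1+\x)\bigr]$, and differentiating once more gives
\[
  D^2\cP_{n,k}(\x) = \cP_{n,k}(\x)\Bigl\{\Bigl[\tfrac{k}{\x}-\tfrac{n+k}{1+\x}\Bigr]^{2} - \tfrac{k}{\x^{2}} + \tfrac{n+k}{(1+\x)^{2}}\Bigr\}.
\]
Multiplying by $\psi(\x)=\x(1+\x)$ and expanding the square (the cross term $-2k(n+k)/(\x(1+\x))$ absorbs cleanly) yields precisely the first line of the definition of $T_{n,k}$ in (4.3), which proves (4.6). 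The second (equivalent) form of $T_{n,k}$ in (4.3) is a pure algebraic identity: substituting $k = n\x + nt$ with $t = k/n - \x$ and clearing $\psi(\x)$, the $n^{2}$-coefficient collapses to $t^{2}/\psi$ via the factorization $a^{2} - 2ab + b^{2} = (a-b)^{2}$ with $a = (\x+t)(1+\x)$, $b = (1+\x+t)\x$, $a-b = t$, while the remaining $n$-coefficient works out to $-1 - (1+2\x)t/\psi$.

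For part (b), I would use the second form from (4.3) to write
\[
  \alpha - \tfrac{1}{n}T_{n,k}(\x) \;=\; (\alpha+1) \;+\; \tfrac{1+2\x}{\psi(\x)}\,t_{k} \;-\; \tfrac{n}{\psi(\x)}\,t_{k}^{2}, \qquad t_{k} := \tfrac{k}{n}-\x.
\]
Squaring, multiplying by $\cP_{n,k}(\x)$, and summing over $k$, the sum $\Phi_n(\alpha)$ becomes a linear combination of the Baskakov central moments $\mu_{n,j}(\x)$ for $j = 0,1,2,3,4$. The moments $\mu_{n,0},\mu_{n,1},\mu_{n,2},\mu_{n,3}$ are supplied by (4.2); for $\mu_{n,4}$ I would use the standard recursion $n\,\mu_{n,j+1}(\x) = \psi(\x)\bigl[\mu_{n,j}'(\x) + j\,\mu_{n,j-1}(\x)\bigr]$, which follows in one line from the identity $\psi(\x)\,D\cP_{n,k}(\x) = n\,t_{k}\cP_{n,k}(\x)$ obtained in part (a). This gives $\mu_{n,4}(\x) = 3\psi^{2}/n^{2} + (6\psi^{2}+\psi)/n^{3}$.

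The final step is the collection of terms. The $(\alpha+1)^{2}$-contribution combined with the cross term $-2(\alpha+1)(n/\psi)\mu_{n,2}$ yields $\alpha^{2}-1$. The two contributions proportional to $(1+2\x)^{2}/(n\psi)$ — one from $\bigl[(1+2\x)/\psi\bigr]^{2}\mu_{n,2}$, the other from the cross term $-2\bigl[(1+2\x)/\psi\bigr](n/\psi)\mu_{n,3}$ — partially cancel, leaving $-(1+2\x)^{2}/(n\psi)$. This is then absorbed by $(n^{2}/\psi^{2})\mu_{n,4}$ after invoking the identity $(1+2\x)^{2} = 1 + 4\psi(\x)$, which converts all remaining $\x$-dependence into constants and produces the stated value $\alpha^{2} + 2 + 2/n$.

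The only genuine obstacle is the bookkeeping in this last step: several terms depend nontrivially on $\x$ through $\psi$ and $(1+2\x)^{2}$, and without the rewriting $(1+2\x)^{2} = 1 + 4\psi$ the cancellation is not transparent. Everything else is mechanical.
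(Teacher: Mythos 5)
Your proof is correct. Note that this paper does not prove Proposition~\ref{pr:4.1} at all --- it is imported verbatim from \cite[Proposition~2.3]{GaPaUl2025} --- so there is no internal argument to compare against; your proposal supplies a self-contained verification along the natural lines. I checked the details: logarithmic differentiation does give $\wcD\cP_{n,k}=T_{n,k}\cP_{n,k}$ with the first form of \eqref{eq:4.3}, and the substitution $k=n(\x+t)$ with $a-b=t$ indeed reduces the $n^2$- and $n$-coefficients to $t^2/\psi$ and $-1-(1+2\x)t/\psi$. For (b), the identity $\psi\,D\cP_{n,k}=n\,t_k\,\cP_{n,k}$ yields the stated moment recursion, hence $\mu_{n,4}=3\psi^2/n^2+(\psi+6\psi^2)/n^3$, and the bookkeeping closes exactly as you describe: $(\alpha+1)^2-2(\alpha+1)=\alpha^2-1$, the two $(1+2\x)^2/(n\psi)$ terms leave $-(1+2\x)^2/(n\psi)$, and with $(1+2\x)^2=1+4\psi$ the contribution of $(n^2/\psi^2)\mu_{n,4}=3+1/(n\psi)+6/n$ absorbs it, giving $\alpha^2+2+2/n$.
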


\begin{proposition} \label{pr:4.2}
  For all $\x\in [0,\infty)$,
  $$
    \sum_{k=0}^{\infty} \,T_{n,k}(\x)\Big(\frac{k}{n}-\x\Big)\,\cP_{n,k}(\x) = 0.
  $$
\end{proposition}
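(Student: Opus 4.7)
The plan is to use the second representation of $T_{n,k}(\x)$ given in \eqref{eq:4.3}, which expresses $T_{n,k}$ directly in terms of the quantity $\frac{k}{n}-\x$. Substituting that form into the sum immediately turns it into a linear combination of the central moments $\mu_{n,1}$, $\mu_{n,2}$, $\mu_{n,3}$ of the Baskakov operator, whose explicit values are recorded in \eqref{eq:4.2}.

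Concretely, I would first multiply the identity
$$
  T_{n,k}(\x) = n\bigg[-1-\frac{1+2\x}{\psi(\x)}\Big(\frac{k}{n}-\x\Big) + \frac{n}{\psi(\x)}\Big(\frac{k}{n}-\x\Big)^{\!2}\bigg]
$$
by $(k/n-\x)\cP_{n,k}(\x)$ and sum over $k\ge 0$. This rewrites the left-hand side of the claim as
$$
  n\bigg[-\mu_{n,1}(\x) - \frac{1+2\x}{\psi(\x)}\,\mu_{n,2}(\x) + \frac{n}{\psi(\x)}\,\mu_{n,3}(\x)\bigg].
$$
Then I would plug in $\mu_{n,1}(\x)=0$, $\mu_{n,2}(\x)=\psi(\x)/n$, $\mu_{n,3}(\x)=(1+2\x)\psi(\x)/n^2$ from \eqref{eq:4.2}. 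The $\mu_{n,2}$ and $\mu_{n,3}$ contributions both equal $\pm(1+2\x)/n$ up to sign and cancel, giving $0$.

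There is no real obstacle here: the identity is essentially an algebraic consequence of the formula for $T_{n,k}$ in \eqref{eq:4.3} combined with the known moments. The only thing one has to be mildly careful about is the convergence of the series, but this is automatic since each $\mu_{n,j}(\x)$ is a finite polynomial expression, and the manipulations above are a rearrangement of an absolutely convergent sum. An alternative route would be to differentiate $\Phi_n(\alpha)$ from Proposition~\ref{pr:4.1}(b) with respect to $\alpha$ and extract an identity, but this is less direct than the computation via central moments, so I would stick with the first approach.
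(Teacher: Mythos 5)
Your proposal matches the paper's proof: the authors also substitute the second representation of $T_{n,k}$ from \eqref{eq:4.3} into the sum and evaluate it as $n\big[-\mu_{n,1}-\frac{1+2\x}{\psi(\x)}\mu_{n,2}+\frac{n}{\psi(\x)}\mu_{n,3}\big]=0$ using the moment values in \eqref{eq:4.2}. The argument is correct and essentially identical.
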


\begin{proof}
From \eqref{eq:4.2} and \eqref{eq:4.3} we obtain
$$
  \sum_{k=0}^{\infty} \,T_{n,k}(\x)\Big(\frac{k}{n}-\x\Big)\cP_{n,k}(\x)
  = n\bigg[-\mu_{n,1}(\x)-\frac{1+2\x}{\psi(\x)}\,\mu_{n,2}(\x) + \frac{n}{\psi(\x)}\,\mu_{n,3}(\x)\bigg] = 0.
$$
\end{proof}


\medskip
\section{A Direct Inequality} \label{s5}

We will first prove the next upper estimate for the norm of the modified MKZ-GS operator $\wM_n$
defined in \eqref{eq:1.5}.

\begin{lemma} \label{le:5.1}
  If $\,n\in \mathbb{N},$  $n\ge 2$ and $f\in C[0,1)$, then
  \begin{equation} \label{eq:5.1}
    \big\|\wM_n f\big\| \le \sqrt{6}\,\|f\|, \qquad\mbox{i.e.} \qquad  \|\wM_n\| \le \sqrt{6}.
  \end{equation}
\end{lemma}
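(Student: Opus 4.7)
The plan is to prove the equivalent inequality on the Baskakov side, namely $\|\w \wV_n \f\|_{[0,\infty)} \le \sqrt{6}\,\|\w \f\|_{[0,\infty)}$, where $\f = \sT^{-1}f$ and $\w(\x) = (1+\x)^{-1}$. This reduction is immediate from \eqref{eq:2.12}--\eqref{eq:2.13} in Proposition~\ref{pr:2.5}, and it avoids having to compute an explicit analogue of $T_{n,k}$ for the MKZ basis.

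Using \eqref{eq:4.6} I would rewrite $\wcP_{n,k}(\x) = \cP_{n,k}(\x)\bigl[1 - \tfrac{1}{n}T_{n,k}(\x)\bigr]$ and apply the Cauchy--Schwarz inequality to the series
$$
  \wV_n(\f,\x) = \sum_{k=0}^{\infty} v_{n,k}(\f)\,\sqrt{\cP_{n,k}(\x)}\cdot\sqrt{\cP_{n,k}(\x)}\bigl[1 - \tfrac{1}{n}T_{n,k}(\x)\bigr],
$$
obtaining
$$
  |\wV_n(\f,\x)|^{2} \le \Bigl(\sum_{k} v_{n,k}(\f)^{2}\cP_{n,k}(\x)\Bigr)\cdot \Phi_n(1).
$$
The second factor equals $3 + 2/n$ by Proposition~\ref{pr:4.1}(b) with $\alpha = 1$.

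For the first factor I would extract the weight $\w$ from the integral defining $v_{n,k}(\f)$. Since $|\f(t)| \le (1+t)\|\w\f\|$, for $k\ge 1$,
$$
  |v_{n,k}(\f)| \le \|\w\f\|\,(n+1)\!\int_{0}^{\infty}\!\cP_{n+2,k-1}(t)(1+t)\,dt,
$$
and a direct Beta-function evaluation yields $(n+1)\int_0^\infty \cP_{n+2,k-1}(t)(1+t)\,dt = (n+k)/n$, which is also valid for $k=0$ (where it reduces to $1$). Then the identity $(n+k)\cP_{n,k}(\x) = n(1+\x)\cP_{n+1,k}(\x)$ from \eqref{eq:4.1}, applied twice and using $\sum_k \cP_{m,k}(\x) = 1$, gives the closed form $\sum_{k}(n+k)^{2}\cP_{n,k}(\x) = n(1+\x)\bigl[(n+1)\x + n\bigr]$.

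Combining the two factors and multiplying by $\w(\x)^{2} = (1+\x)^{-2}$ produces
$$
  \w(\x)^{2}|\wV_n(\f,\x)|^{2} \le \Bigl(3 + \tfrac{2}{n}\Bigr)\cdot \frac{(n+1)\x + n}{n(1+\x)}\cdot \|\w\f\|^{2}.
$$
A one-line check shows that $g(\x) := \frac{(n+1)\x + n}{n(1+\x)}$ is increasing on $[0,\infty)$ with $\sup g = 1 + 1/n$, so $\|\w\wV_n\f\|^{2} \le (3+2/n)(1+1/n)\|\w\f\|^{2}$; and $(3+2/n)(1+1/n)\le 6$ holds precisely for $n\ge 2$ (with equality at $n=2$). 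The only real obstacle is choosing the correct Cauchy--Schwarz splitting and weight to make the two factors evaluate in closed form and combine to the sharp constant $\sqrt{6}$; everything else is moment arithmetic for the Baskakov basis that has already been set up in Section~\ref{s4}.
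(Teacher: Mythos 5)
Your proposal is correct and follows essentially the same route as the paper: reduction to the Baskakov side via Proposition~\ref{pr:2.5}, the factorization $\wcP_{n,k}=(1-\tfrac1n T_{n,k})\cP_{n,k}$, a Cauchy--Schwarz step producing $\Phi_n(1)=3+\tfrac2n$, the bound $|v_{n,k}(\f)|\le\tfrac{n+k}{n}\|\w\f\|$, and the same second-moment evaluation yielding $(3+\tfrac2n)(1+\tfrac1n)\le 6$ for $n\ge2$. The only difference is cosmetic --- you apply Cauchy--Schwarz before extracting $\|\w\f\|$ from the coefficients, while the paper bounds $|v_{n,k}(\f)|$ first and then splits the resulting weighted sum --- and both orderings give identical factors and the same constant.
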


\begin{proof}
In view of Proposition~\ref{pr:2.5}, it is sufficient to prove that if $\w(\x)=(1+\x)^{-1}$,
$n\in\mathbb{N}$, $n\ge 2$, and $\f\in C[0,\infty)$, then
\begin{equation} \label{eq:5.2}
  \| \w \wV_n \f\|_{[0,\infty)} \le \,\sqrt{6}\, \|\w\f\|_{[0,\infty)}.
\end{equation}

Obviously, $|v_{n,k}(\w^{-1}\w\f)| \le \|\w\f\|_{[0,\infty)}|v_{n,k}(\w^{-1})|$. Then for
$\x\in [0,\infty)$ we have
\begin{align*}
  \big|\w(\x)\wV_n(\f,\x)\big|
    & \le \w(\x)\sum_{k=0}^\infty |v_{n,k}(\f)|\,\big|\wcP_{n,k}(\x)\big|
      = \w(\x) \sum_{k=0}^\infty |v_{n,k}(\w^{-1}\w\f)|\,\big|\wcP_{n,k}(\x)\big| \\
    & \le \|\w\f\|_{[0,\infty)}\, \w(\x)\sum_{k=0}^\infty v_{n,k}(\w^{-1})\,\big|\wcP_{n,k}(\x)\big| \\
    & = \|\w\f\|_{[0,\infty)}\, \w(\x)\sum_{k=0}^\infty \frac{n+k}n\,\big|\wcP_{n,k}(\x)\big|,
\end{align*}
i.e.
\begin{equation} \label{eq:5.3}
  \big|\w(\x)\wV_n(\f,\x)\big| \le
  \|\w\f\|_{[0,\infty)}\, \w(\x)\sum_{k=0}^\infty \frac{n+k}n\,\big|\wcP_{n,k}(\x)\big|\,.
\end{equation}
From \eqref{eq:2.10} and \eqref{eq:4.6} we get
$$
  \wcP_{n,k}(\x) = \Big(1 - \frac{1}{n}\,T_{n,k}(\x)\Big) \cP_{n,k}(\x).
$$
Then the Cauchy inequality, \eqref{eq:4.2} and Proposition~\ref{pr:4.1}\,(b) with
$\alpha=1$ yield
\begin{align*}
  \w(\x)\sum_{k=0}^{\infty} \frac{n+k}n\,\big|\wcP_{n,k}(\x)\big|
  & = \w(\x)\sum_{k=0}^{\infty} \Big|1 - \frac{1}{n}\,T_{n,k}(\x)\Big| \frac{n+k}n\,\cP_{n,k}(\x) \\
  & \le \w(\x) \sqrt{\sum_{k=0}^{\infty} \Big(1-\frac{1}{n}\,T_{n,k}(\x)\Big)^{\!2} \cP_{n,k}(\x)}\,
             \sqrt{\sum_{k=0}^{\infty} \Big(\frac{k}{n}+1\Big)^{\!2} \cP_{n,k}(\x)} \\
  & = \w(\x) \sqrt{\Phi_n(1)}\,\sqrt{(1+\x)^2+\frac{\psi(\x)}{n}} \\
  & = \w(\x) \sqrt{3 + \frac{2}{n}}\,\sqrt{(1+\x)^2\Big(1+\frac{\x}{n(1+\x)}\Big)} \\
  & \le \sqrt{3 + \frac{2}{n}}\,\sqrt{1+\frac{1}{n}}\,.
\end{align*}
Hence for $\x\in [0,\infty)$  we have
\begin{equation} \label{eq:5.4}
  \w(\x)\sum_{k=0}^{\infty} \frac{n+k}n\,\big|\wcP_{n,k}(\x)\big| \le \sqrt{6}, \qquad n\ge 2.
\end{equation}

Now, inequality \eqref{eq:5.2} follows immediately from \eqref{eq:5.3} and \eqref{eq:5.4}.

To complete the proof of the lemma, we apply \eqref{eq:5.2}, \eqref{eq:2.12} and \eqref{eq:2.13}:
$$
  \big\|\wM_n f\big\| =   \| \w \wV_n \f\|_{[0,\infty)}
                      \le \sqrt{6}\, \|\w\f\|_{[0,\infty)}
                      = \sqrt{6}\, \|f\|.
$$
\end{proof}

To prove a direct theorem for the approximation rate for functions $f$ by the modified MKZ-GS
operator $\wM_n f$ we need a Jackson type inequality.

\begin{lemma} \label{le:5.2}
  If $\,n\in \mathbb{N}$, $f\in W^2_0(\varphi)[0,1)$ and $\wD f\in W^2(\varphi)[0,1)$, then
  $$
    \big\|\wM_n f - f\big\| \le \frac{1}{n^2}\,\|\wD^2 f\|.
  $$
\end{lemma}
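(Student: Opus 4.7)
The plan is to exploit the first-order correction built into $\wM_n$, namely the identity from Proposition~\ref{pr:3.2}(a) together with Proposition~\ref{pr:3.1}(b), which gives
$$
\wM_n f - f \;=\; (M_n f - f) \,-\, \frac{1}{n}\,\wD M_n f,
$$
and to show that the second term cancels the leading $1/n$ part of the first one, leaving an $O(1/n^2)$ remainder. The key mechanism is a double telescoping based on Proposition~\ref{pr:3.1}(a).

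First I would use Proposition~\ref{pr:3.1}(a) and the convergence $M_N f \to f$ as $N\to\infty$ (which follows from Proposition~\ref{pr:3.1}(f) applied to $f\in W^2(\varphi)[0,1)$) to get the telescoping representation
$$
M_n f - f \;=\; \sum_{k=n}^{\infty} \frac{1}{k(k+1)}\,\wD M_k f.
$$
Combined with the elementary identity $\sum_{k=n}^{\infty} \frac{1}{k(k+1)} = \frac{1}{n}$, the correction term becomes absorbed:
$$
\wM_n f - f \;=\; \sum_{k=n}^{\infty} \frac{1}{k(k+1)}\bigl(\wD M_k f - \wD M_n f\bigr).
$$
Next I would apply Proposition~\ref{pr:3.1}(a) once more, now to the difference $M_k f - M_n f$ for $k\ge n$, yielding
$$
\wD M_k f - \wD M_n f \;=\; -\sum_{j=n}^{k-1} \frac{1}{j(j+1)}\,\wD^2 M_j f,
$$
so that
$$
\wM_n f - f \;=\; -\sum_{k=n}^{\infty}\sum_{j=n}^{k-1} \frac{\wD^2 M_j f}{k(k+1)\,j(j+1)}.
$$

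To bound $\|\wD^2 M_j f\|$ I would combine Proposition~\ref{pr:3.1}(b) (valid since $f\in W^2_0(\varphi)[0,1)$), giving $\wD M_j f = M_j \wD f$, with Proposition~\ref{pr:3.1}(e) applied to $g = \wD f\in W^2(\varphi)[0,1)$, which yields $\|\wD(M_j\wD f)\|\le \|\wD^2 f\|$; hence $\|\wD^2 M_j f\|\le \|\wD^2 f\|$ uniformly in $j$. After swapping the order of summation and using $\sum_{k=j+1}^{\infty} \frac{1}{k(k+1)} = \frac{1}{j+1}$, the double sum collapses to
$$
\sum_{j=n}^{\infty} \frac{1}{j(j+1)^2} \;\le\; \frac{1}{n+1}\sum_{j=n}^{\infty}\frac{1}{j(j+1)} \;=\; \frac{1}{n(n+1)} \;\le\; \frac{1}{n^2},
$$
which delivers the claimed estimate.

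The main obstacle, and what makes this more delicate than a one-shot application of Proposition~\ref{pr:3.1}(f), is the need to perform the telescoping \emph{twice} rather than once: a single application only gives order $1/n$, while the cancellation of the $\frac{1}{n}\wD M_n f$ term produces exactly the structure needed to iterate. A secondary subtlety is that we cannot commute $\wD$ past $M_j$ a second time (this would require $\wD f\in W^2_0(\varphi)$, which is not assumed), so the Bernstein-type bound of Proposition~\ref{pr:3.1}(e) rather than Proposition~\ref{pr:3.1}(b) must be invoked at the final step.
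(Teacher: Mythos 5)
Your proof is correct and in essence the same as the paper's: it rests on the same ingredients (the telescoping identity of Proposition~\ref{pr:3.1}(a), the commutation \ref{pr:3.1}(b) applied once, the contraction \ref{pr:3.1}(e) for $g=\wD f$, and the bound $\sum_{k\ge n}\frac{1}{k(k+1)^2}\le\frac{1}{n^2}$), and after interchanging your double sum you land exactly on the paper's representation $\wM_n f - f = -\sum_{k\ge n}\frac{1}{k(k+1)^2}\,\wD M_k\wD f$. The only difference is bookkeeping: the paper derives the one-step identity $\wM_k f-\wM_{k+1}f=-\frac{1}{k(k+1)^2}\,\wD M_k\wD f$ and telescopes it once, whereas you telescope $M_n f-f$ and the differences $\wD M_k f-\wD M_n f$ separately and then resum.
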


\begin{proof}
Combining Proposition~\ref{pr:3.1}\,(a) with Proposition~\ref{pr:3.2}\,(a)--(b) we obtain
\begin{align*}
  \wM_k f - \wM_{k+1} f
    & = M_k f - \frac{1}{k}\,\wD M_k f - M_{k+1} f + \frac{1}{k+1}\,\wD M_{k+1} f \\
    & = \frac{1}{k(k+1)}\wD M_k f - \frac{1}{k}\,\wD M_k f + \frac{1}{k+1}\,\wD M_{k+1} f \\
    & = - \frac{1}{k+1}\big(\wD M_k f - \wD M_{k+1} f\big)
      = - \frac{1}{k+1}\big(M_k \wD f - M_{k+1} \wD f\big) \\
    & = - \frac{1}{k+1}\cdot\frac{1}{k(k+1)}\, \wD M_k \wD f,
\end{align*}
i.e.
$$
  \wM_k f - \wM_{k+1} f = - \frac{1}{k(k+1)^2}\,\wD M_k \wD f.
$$
Therefore for every $s>n$ we have
\begin{gather*}
 \wM_n f - \wM_s f = \sum_{k=n}^{s-1} \big(\wM_k f - \wM_{k+1} f\big)
                   = - \sum_{k=n}^{s-1} \frac{1}{k(k+1)^2}\,\wD M_k \wD f.
\end{gather*}
Letting $s\to\infty$ by Proposition~\ref{pr:3.2}\,(a) and (e) we obtain
\begin{equation} \label{eq:5.5}
  \wM_n f - f = - \sum_{k=n}^{\infty} \frac{1}{k(k+1)^2}\,\wD M_k \wD f
              = - \sum_{k=n}^{\infty} \frac{1}{k(k+1)^2}\,M_k \wD^2 f.
\end{equation}
Then Proposition~\ref{pr:3.1}\,(e) yields
$$
  \|\wM_n f - f\| \le \sum_{k=n}^{\infty} \frac{1}{k(k+1)^2}\,\big\|\wD M_k \wD f\big\|
                  \le \sum_{k=n}^{\infty} \frac{1}{k(k+1)^2}\,\big\|\wD^2 f\big\|.
$$
Since $\sum_{k=n}^{\infty} \frac{1}{k(k+1)^2}\le \frac{1}{n^2}$
(see e.g. \cite[Proposition~5, eq.~(20)]{GaPaUl2025}) we conclude that
$$
  \big\|\wM_n f - f\big\| \le \frac{1}{n^2}\,\big\|\wD^2 f\big\|.
$$
\end{proof}


\smallskip
\begin{proof}[Proof of Theorem~\ref{th:1.1}]
Let $g$ be any function with $g\in W^2_0(\varphi)[0,1)$ and $\wD g\in W^2(\varphi)[0,1)$.
Then by Lemma~\ref{le:5.1} and Lemma~\ref{le:5.2} we obtain
\begin{align*}
  \big\|\wM_n f-f\big\|
    & \le \big\|\wM_n f-\wM_n g\big\| + \big\|\wM_n g-g\big\| + \|g-f\| \\
    & \le \sqrt{6}\,\|f-g\| + \frac{1}{n^2}\,\big\|\wD^2 g\big\| + \|f-g\| \\
    & \le \big(\sqrt{6}+1\big)\Big(\|f-g\|+\frac{1}{n^2}\,\big\|\wD^2 g\big\|\Big).
\end{align*}
Taking the infimum over all functions $g\in W^2_0(\varphi)[0,1)$ with $\wD g\in W^2(\varphi)[0,1)$
we obtain
$$
  \big\|\wM_n f-f\big\| \le \big(\sqrt{6}+1\big)\,K\Big(f,\frac{1}{n}\Big).
$$
\end{proof}


\bigskip
\section{A Strong Converse Result} \label{s6}

First, we will prove a Voronovskaya type result for the operator $\wM_n$.

\begin{lemma} \label{le:6.1}
  If $\,n\ge 2$, $n\in \mathbb{N}$, $\,\lambda(n) = \sum\limits_{k=n}^{\infty} \dfrac{1}{k(k+1)^2}$,
  $\,\theta(n) = \sum\limits_{k=n}^{\infty} \dfrac{1}{k^2(k+1)^2}$, and $f\in C[0,1)$ is such that
  $f,\,\wD f\in W^2_0(\varphi)[0,1)$ and $\wD^3 f\in L_{\infty}[0,1)$, then
  \begin{equation} \label{eq:6.1}
    \big\|\wM_n f - f + \lambda(n)\wD^2 f\big\| \le \theta(n)\,\big\|\wD^3 f\big\|.
  \end{equation}
\end{lemma}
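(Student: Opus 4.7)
The plan is to reuse the telescoping identity already established inside the proof of Lemma~\ref{le:5.2}. Specifically, equation \eqref{eq:5.5} gave
$$
  \wM_n f - f = -\sum_{k=n}^{\infty} \frac{1}{k(k+1)^2}\,M_k \wD^2 f,
$$
and the hypotheses of the present lemma (in particular $\wD f\in W^2_0(\varphi)[0,1)$ together with $\wD^3 f\in L_{\infty}[0,1)$, which imply $\wD^2 f\in W^2(\varphi)[0,1)$) are exactly what is needed to invoke that identity and the pieces of Proposition~\ref{pr:3.1} below.

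Next I would exploit the definition of $\lambda(n)$ to rewrite
$$
  \wM_n f - f + \lambda(n)\wD^2 f
    = -\sum_{k=n}^{\infty}\frac{1}{k(k+1)^2}\,\bigl(M_k \wD^2 f - \wD^2 f\bigr),
$$
converting the difference we want to estimate into a weighted sum of Jackson-type residuals $M_k g - g$ with $g=\wD^2 f$.

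Then I would apply the Jackson-type inequality Proposition~\ref{pr:3.1}\,(f) to $g=\wD^2 f\in W^2(\varphi)[0,1)$, which gives
$$
  \bigl\|M_k \wD^2 f - \wD^2 f\bigr\| \le \frac{1}{k}\,\bigl\|\wD\,\wD^2 f\bigr\| = \frac{1}{k}\,\|\wD^3 f\|.
$$
Taking the uniform norm, moving it inside the series by the triangle inequality, and substituting this bound yields
$$
  \bigl\|\wM_n f - f + \lambda(n)\wD^2 f\bigr\|
    \le \sum_{k=n}^{\infty}\frac{1}{k(k+1)^2}\cdot\frac{1}{k}\,\|\wD^3 f\|
    = \theta(n)\,\|\wD^3 f\|,
$$
which is exactly \eqref{eq:6.1}.

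The proof is essentially a bookkeeping argument, so no single step is a serious obstacle; the only place that requires care is verifying that the hypotheses $f,\wD f\in W^2_0(\varphi)[0,1)$ and $\wD^3 f\in L_{\infty}[0,1)$ do put $\wD^2 f$ in the class $W^2(\varphi)[0,1)$ needed to apply the Jackson bound, but this is immediate from the definitions of $\wD^2$ and $\wD^3$ and the requirement that $\wD^3 f$ be essentially bounded. Everything else is just using \eqref{eq:5.5} and summing a known series.
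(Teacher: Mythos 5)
Your proposal is correct and follows essentially the same route as the paper: both start from identity \eqref{eq:5.5}, absorb the $\lambda(n)\wD^2 f$ term into the series to get residuals $\wD^2 f - M_k\wD^2 f$, and then apply the Jackson-type inequality of Proposition~\ref{pr:3.1}\,(f) with $g=\wD^2 f$ to obtain the bound $\theta(n)\,\|\wD^3 f\|$. No discrepancies to report.
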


\begin{proof}
We have from \eqref{eq:5.5} that
$$
  \wM_n f - f + \lambda(n)\wD^2 f
    = - \sum_{k=n}^{\infty} \frac{M_k\wD^2 f}{k(k+1)^2} + \sum_{k=n}^{\infty} \frac{\wD^2 f}{k(k+1)^2}
    = \sum_{k=n}^{\infty} \frac{\wD^2 f - M_k \wD^2 f}{k(k+1)^2}.
$$
Then
$$
  \big\|\wM_n f - f + \lambda(n)\wD^2 f\big\|
  \le \sum_{k=n}^{\infty} \frac{1}{k(k+1)^2}\,\big\|\wD^2 f - M_k \wD^2 f\big\|.
$$
Using the Jackson inequality Proposition~\ref{pr:3.1}\,(f) with $\wD^2 f$ replacing $g$ we obtain
$$
  \big\|\wM_n f - f + \lambda(n)\wD^2 f\big\|
    \le \sum_{k=n}^{\infty} \frac{1}{k(k+1)^2}\cdot\frac{1}{k}\,\big\|\wD \wD^2 f\big\|
    = \theta(n)\,\big\|\wD^3 f\big\|.
$$
\end{proof}

The following is an inequality of Bernstein type.

\begin{lemma} \label{le:6.2}
  If $n\in\mathbb{N}$, $n\ge 17$ and $f\in C[0,1)$, then
  \begin{equation} \label{eq:6.2}
    \|\wD \wM_n f\| \le \wC\, n\|f\|, \qquad \text{where} \ \wC=17.
  \end{equation}
\end{lemma}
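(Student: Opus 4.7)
The plan is to follow the same strategy as in Lemma~\ref{le:5.1}: transfer the inequality to the Baskakov-GS side via Proposition~\ref{pr:2.5}, bound the functional coefficients, and estimate the remaining weighted sum of basis functions by Cauchy--Schwarz together with Proposition~\ref{pr:4.1}(b). By \eqref{eq:2.12} and \eqref{eq:2.14} of Proposition~\ref{pr:2.5}, inequality \eqref{eq:6.2} is equivalent to
$$\bigl\|\w\wcD\wV_n\f\bigr\|_{[0,\infty)} \le 17\,n\,\|\w\f\|_{[0,\infty)},$$
where $\f=\sT^{-1}f$ and $\w(\x)=(1+\x)^{-1}$. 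Applying $\wcD$ term-by-term to the series $\wV_n\f$ yields $\wcD\wV_n\f=\sum_k v_{n,k}(\f)\wcD\wcP_{n,k}$, and the pointwise bound $|v_{n,k}(\w^{-1}\w\f)|\le\|\w\f\|_{[0,\infty)}(n+k)/n$ (derived exactly as in the proof of Lemma~\ref{le:5.1}) reduces the problem to proving
$$\w(\x)\sum_{k=0}^{\infty}\frac{n+k}{n}\bigl|\wcD\wcP_{n,k}(\x)\bigr| \le 17\,n, \qquad \x\in[0,\infty),\ n\ge 17.$$

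The central step is an explicit formula for $\wcD\wcP_{n,k}$. Starting from $\wcP_{n,k}=(1-T_{n,k}/n)\cP_{n,k}$, which is immediate from \eqref{eq:4.6}, and applying the Leibniz identity $\wcD(uv)=(\wcD u)v+2\psi u'v'+u(\wcD v)$, together with a second use of \eqref{eq:4.6} for $\wcD\cP_{n,k}$, one obtains
$$\wcD\wcP_{n,k} = \Bigl(T_{n,k}-\frac{T_{n,k}^{2}}{n}-\frac{\wcD T_{n,k}}{n}\Bigr)\cP_{n,k} - \frac{2\psi T_{n,k}'}{n}\cP_{n,k}'.$$
The elementary identity $\psi\cP_{n,k}'=(k-n\x)\cP_{n,k}$ (a direct computation from the definition of $\cP_{n,k}$ in \eqref{eq:2.6}) recasts the last summand as a multiple of $\cP_{n,k}$ alone, so every coefficient of $\cP_{n,k}$ becomes an explicit function of $\x$ and $k/n-\x$ via \eqref{eq:4.3}--\eqref{eq:4.5}.

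Applying Cauchy--Schwarz to the reduced sum then splits the estimate into pieces of the form $\sqrt{\sum_k R_{n,k}(\x)^{2}\cP_{n,k}(\x)}$ paired with a Baskakov moment factor. The dominant $O(n)$ contribution comes from $T_{n,k}^{2}\cP_{n,k}/n$ and is handled by Proposition~\ref{pr:4.1}(b) with $\alpha=0$, while the pieces involving $\wcD T_{n,k}$ and $(k-n\x)T_{n,k}'$ reduce to combinations of central moments controlled by \eqref{eq:4.2}. The weight $\w(\x)=(1+\x)^{-1}$ precisely cancels the $(1+\x)$-growth coming from $\psi(\x)=\x(1+\x)$ in those moments, producing a uniform bound in $\x$. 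The main obstacle will be the bookkeeping of numerical constants: one must track how the $(n+k)/n$ weight from the functional estimate combines with each term, and verify that the totals — the leading $\sqrt{\Phi_n(0)}$-type term and the lower-order remainders from $\wcD T_{n,k}$ and $T_{n,k}'$ — close up to $17\,n$. The hypothesis $n\ge 17$ is exactly what permits the absorption of these $O(1)$ and $O(n^{1/2})$ remainders into the stated constant $\wC=17$.
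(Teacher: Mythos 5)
Your reduction to the Baskakov side via Proposition~\ref{pr:2.5}, the bound $|v_{n,k}(\f)|\le\|\w\f\|_{[0,\infty)}\frac{n+k}{n}$, and the formula for $\wcD\wcP_{n,k}$ obtained from the Leibniz identity together with $\psi\cP_{n,k}'=(k-n\x)\cP_{n,k}$ are all correct, and this is essentially the paper's route (the paper imports an equivalent decomposition of $\wcD\wcP_{n,k}$ from \cite{GaPaUl2025}). The gap lies in the two terms you wave off as remainders. The contributions of $\frac{\psi}{n}T_{n,k}''$ and of $\frac{2}{n}(k-n\x)T_{n,k}'$ are \emph{not} ``controlled by \eqref{eq:4.2}'': those central moments are listed only up to order three and without absolute values, whereas your sums carry $|T_{n,k}''|$ and $|(k-n\x)T_{n,k}'|$, and any Cauchy--Schwarz treatment of them leads to fourth-order expressions in $k$ with factors $\x^{-4}$, $\x^{-6}$ about which \eqref{eq:4.2}, Proposition~\ref{pr:4.1} and Proposition~\ref{pr:4.2} say nothing. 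In the paper the $T''$ piece is exactly the quantity $\ca_n(\x)$, and its uniform bound $\frac{\psi(\x)}{n}\sum_k|T_{n,k}''(\x)|\,\cP_{n,k}(\x)\le 6n$ for $n\ge 17$ is a separate, nontrivial inequality imported from \cite{GaPaUl2025}; nothing in the toolbox you invoke yields it, so the hardest quantitative step of the lemma is simply asserted.

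Your order-of-magnitude accounting is also wrong: these are not $O(1)$ or $O(n^{1/2})$ corrections to be ``absorbed'' thanks to $n\ge 17$. In the paper's proof the three pieces are bounded by $\ca_n\le 7n$, $\cb_n\le 7n$ (the cross term, i.e.\ essentially your $(k-n\x)T_{n,k}'/n$ contribution) and $\cc_n\le 3n$ (your ``dominant'' $T_{n,k}\bigl(1-\tfrac1n T_{n,k}\bigr)$ term, handled exactly as you propose via Proposition~\ref{pr:4.1}(b)). Thus the terms you treat as negligible are of the same order $n$ as the main one and in fact account for $14$ of the $17$ in the constant; the hypothesis $n\ge 17$ enters through the estimate of $\ca_n$ (and the final summation of constants), not through absorbing small remainders. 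Without genuine $O(n)$ estimates for the $T''$ and cross terms — which is where all the work lies — the proof does not close.
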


\begin{proof}
Because of Proposition~\ref{pr:2.5}, it is sufficient to prove that if $\w(\x)=(1+\x)^{-1}$,
$n\in\mathbb{N}$, and $\f\in C[0,\infty)$, then for $n\ge 17$ the inequality
\begin{equation} \label{eq:6.3}
  \| \w\wcD \wV_n \f\|_{[0,\infty)} \le \wC\, n\|\w\f\|_{[0,\infty)}
\end{equation}
holds true, where $\wC=17$\,.

Similarly to the proof of Lemma~\ref{le:5.1}, for $\x\in [0,\infty)$ we have
$$
  \big|\w(\x) \wcD \wV_n(\f,\x)\big| \le \w(\x) \sum_{k=0}^\infty |v_{n,k}(\f)|\,\big|\wcD\wcP_{n,k}(\x)\big|,
$$
hence
\begin{equation} \label{eq:6.4}
  \big|\w(\x) \wcD\wV_n(\f,\x)\big| \le \|\w\f\|_{[0,\infty)} \w(\x) \sum_{k=0}^\infty \frac{n+k}n\,\big|\wcD\wcP_{n,k}(\x)\big|.
\end{equation}

In order to estimate the expression on the right-hand side of \eqref{eq:6.4} we use the relation
\begin{align*}
  \wcD \wcP_{n,k}(\x) & = \frac{\psi(\x)}{n}\,T_{n,k}''(\x) \cP_{n,k}(\x)
                                        + 2\big[T_{n+1,k-1}(\x)\cP_{n+1,k-1}(\x) +T_{n+1,k}(\x)\cP_{n+1,k}(\x)\big] \\
                      & \qquad + \Big(1 - \frac{1}{n}\,T_{n,k}(\x)\!\Big) T_{n,k}(\x) \cP_{n,k}(\x),
\end{align*}
see the proof of Lemma~4, p.~11, in \cite{GaPaUl2025}. Therefore
$$
   \w(\x) \sum_{k=0}^{\infty} \frac{n+k}n\,\big|\wcD \wcP_{n,k}(\x)\big| \le \ca_n(\x) + \cb_n(\x) + \cc_n(\x),
$$
where
\begin{align*}
  \ca_n(\x) & = \w(\x) \frac{\psi(\x)}{n} \sum_{k=0}^{\infty} \big|T_{n,k}''(\x)\big| \frac{n+k}n\,\cP_{n,k}(\x), \\
  \cb_n(\x) & = 2\w(\x) \sum_{k=0}^{\infty} |T_{n+1,k-1}(\x) | \frac{n+k}n\,\cP_{n+1,k-1}(\x) \\
            & \qquad + 2\w(\x) \sum_{k=0}^{\infty} |T_{n+1,k}(\x)| \frac{n+k}n\,\cP_{n+1,k}(\x), \\
  \cc_n(\x) & = \w(\x) \sum_{k=0}^{\infty} \Big|\Big(1 - \frac{1}{n}\,T_{n,k}(\x)\!\Big) T_{n,k}(\x)\Big| \frac{n+k}n\,\cP_{n,k}(\x).
\end{align*}

\smallskip
{\sl (i) \ Estimation of $\ca_n(\x)$.}
From \cite[Proof of Lemma~4, eq.~(30)]{GaPaUl2025} we have
\begin{equation} \label{eq:6.5}
  \frac{\psi(\x)}{n} \sum_{k=0}^{\infty} |T_{n,k}''(\x)| \cP_{n,k}(\x) \le 6n, \qquad\text{for all} \ \ n\ge 17, \ \ \x\in [0,\infty).
\end{equation}
Since $T_{n,k}''(\x) = T_{n+1,k}''(\x) + \frac{4(n+k+1)}{(1+\x)^3}$, combining with \eqref{eq:4.1}
and \eqref{eq:6.5} we have
\begin{align*}
  \ca_n(\x) & = \frac{\psi(\x)}{n} \sum_{k=0}^{\infty} |T_{n,k}''(\x)| \cP_{n+1,k}(\x) \\
            & = \frac{\psi(\x)}{n} \sum_{k=0}^{\infty} \bigg|T_{n+1,k}''(\x)+\frac{4(n+k+1)}{(1+\x)^3}\bigg| \cP_{n+1,k}(\x) \\
            & \le \frac{n+1}{n}\, \frac{\psi(\x)}{n+1} \sum_{k=0}^{\infty} \big|T_{n+1,k}''(\x)\big| \cP_{n+1,k}(\x)+ \frac{4\psi(\x)}{n} \sum_{k=0}^{\infty} \frac{n+k+1}{(1+\x)^3}\, \cP_{n+1,k}(\x) \\
            & \le \frac{n+1}{n}\,6(n+1)+\frac{4\psi(\x)}{(1+\x)^3}\,\frac{n+1}{n} \sum_{k=0}^{\infty} \Big(1+\frac{k}{n+1}\Big) \cP_{n+1,k}(\x) \\
            & = \frac{6(n+1)^2}{n} + \frac{4\psi(\x)}{(1+\x)^3}\,\frac{n+1}{n}\,(1+\x) \\
            & = \frac{6(n+1)^2}{n} + \frac{4\x}{(1+\x)}\,\frac{n+1}{n} \\
            & \le \frac{6(n+1)^2}{n}+\frac{4(n+1)}{n}\,.
\end{align*}
Hence for $n\ge 14$,
\begin{equation} \label{eq:6.6}
  \ca_n(x) \le 7\,n, \qquad x\in [0,\infty).
\end{equation}

\smallskip
{\sl (ii) \ Estimation of $\cb_n(\x)$.} We have

\begin{align*}
  \frac{\cb_n(\x)}{2\w(\x)} & = \sum_{k=0}^{\infty} |T_{n+1,k-1}(\x) | \frac{n+k}n\,\cP_{n+1,k-1}(\x) + \sum_{k=0}^{\infty} |T_{n+1,k}(\x)| \frac{n+k}n\,\cP_{n+1,k}(\x) \\
                 & = \sum_{k=0}^{\infty} |T_{n+1,k}(\x) | \frac{n+1+k}n\,\cP_{n+1,k}(\x) + \sum_{k=0}^{\infty} |T_{n+1,k}(\x)| \frac{n+k}n\,\cP_{n+1,k}(\x) \\
                 & = \frac{n+1}{n} \sum_{k=0}^{\infty} |T_{n+1,k}(\x)| \frac{n+1+k}{n+1}\,\cP_{n+1,k}(\x) \\
                 & \qquad + \frac{n+1}{n} \sum_{k=0}^{\infty} |T_{n+1,k}(\x)| \Big(\frac{n+1+k}{n+1}-\frac{1}{n+1}\Big)\,\cP_{n+1,k}(\x) \\
                 & = \frac{2(n+1)}{n} \sum_{k=0}^{\infty} |T_{n+1,k}(\x)| \frac{n+1+k}{n+1}\,\cP_{n+1,k}(\x)
                     - \frac{1}{n} \sum_{k=0}^{\infty} |T_{n+1,k}(\x)|\,\cP_{n+1,k}(\x) \\
                 & \le \frac{2(n+1)}{n} \sum_{k=0}^{\infty} |T_{n+1,k}(\x)| \frac{n+1+k}{n+1}\,\cP_{n+1,k}(\x)
\end{align*}

By the Cauchy inequality and Proposition~\ref{pr:4.1}\,(b) with $\alpha=0$ we obtain
\begin{align*}
  \cb_n(\x) & \le 4\w(\x) \frac{n+1}{n} \sqrt{\sum_{k=0}^{\infty} T_{n+1,k}^2(\x) \cP_{n+1,k}(\x)}\,\sqrt{\sum_{k=0}^{\infty} \Big(\frac{n+1+k}{n+1}\Big)^2 \cP_{n+1,k}(\x)} \\
            & = 4\w(\x)\,\frac{n+1}{n}\sqrt{(n+1)^2\,\Phi_{n+1}(0)}\:\sqrt{(1+\x)^2+\frac{\psi(\x)}{n+1}} \\
            & = \frac{4(n+1)^2}{n}\,\sqrt{2+\frac2{n+1}}\,\sqrt{1+\frac{\x}{(1+\x)(n+1)}} \\
            & \le \frac{4\sqrt{2}\,(n+1)(n+2)}{n}\,.
\end{align*}
Obviously, for all $n\ge 17$,
\begin{equation} \label{eq:6.7}
  \cb_n(\x) \le 7\,n, \qquad \x\in [0,\infty).
\end{equation}

\smallskip
{\sl (iii) \ Estimation of $\cc_n(\x)$.} Let us first point out that from \eqref{eq:4.3} it follows
\begin{equation} \label{eq:6.8}
  T_{n,k}(\x) = T_{n+1,k}(\x) + \frac{2(n+1)}{1+\x}\Big(\frac{k}{n+1}-\x\Big).
\end{equation}

By applying \eqref{eq:4.1} and the Cauchy inequality we have
\begin{align*}
  \cc_n(\x) 
                & = \sum_{k=0}^{\infty} \Big|\Big(1 - \frac{1}{n}\,T_{n,k}(\x)\!\Big) T_{n,k}(\x)\Big| \frac{n+k}{n(1+\x)}\,\cP_{n,k}(\x) \\
                & = \sum_{k=0}^{\infty} \Big|T_{n,k}(\x)\Big(1 - \frac{1}{n}\,T_{n,k}(\x)\Big)\Big| \cP_{n+1,k}(\x) \\
                & \le \sqrt{\sum_{k=0}^{\infty} T_{n,k}^2(\x) \cP_{n+1,k}(\x)}\,
                        \sqrt{\sum_{k=0}^{\infty} \Big(1-\frac{1}{n}\,T_{n,k}(\x)\Big)^{\!2} \cP_{n+1,k}(\x)}.
\end{align*}

From \eqref{eq:6.8} we obtain
\begin{align*}
  & \sum_{k=0}^{\infty} T_{n,k}^2(\x) \cP_{n+1,k}(\x) \\
  & \qquad = \sum_{k=0}^{\infty} \Big[T_{n+1,k}(\x)+\frac{2(n+1)}{1+\x}\Big(\frac{k}{n+1}-\x\Big)\Big]^2\,\cP_{n+1,k}(\x) \\
  & \qquad = \sum_{k=0}^{\infty} T_{n+1,k}^2(\x) \cP_{n+1,k}(\x) + \frac{4(n+1)}{1+\x} \sum_{k=0}^{\infty} T_{n+1,k}(\x) \Big(\frac{k}{n+1}-\x\Big) \cP_{n+1,k}(\x) \\
  & \qquad\qquad + \frac{4(n+1)^2}{(1+\x)^2} \sum_{k=0}^{\infty} \Big(\frac{k}{n+1}-\x\Big)^{\!2} \cP_{n+1,k}(\x).
\end{align*}

Then \eqref{eq:4.2}, Proposition~\ref{pr:4.2} and Proposition~\ref{pr:4.1}\,(b) with $\alpha=0$ yield
\begin{align*}
  \sum_{k=0}^{\infty} T_{n,k}^2(\x) \cP_{n+1,k}(\x)
    & \le (n+1)^2\,\Phi_{n+1}(0) + 0 + \frac{4(n+1)^2}{(1+\x)^2}\,\mu_{n+1,2}(\x) \\
    & = (n+1)^2\Big(2+\frac{2}{n+1}\Big) + \frac{4(n+1)^2}{(1+\x)^2}\cdot\frac{\psi(\x)}{n+1} \\
    & \le 2(n+1)(n+2) + 4(n+1) \\
    & = 2(n+1)(n+4)\,.
\end{align*}
Moreover, using \eqref{eq:6.8} we get
\begin{align*}
  & \sum_{k=0}^{\infty} \Big(1-\frac{1}{n}\, T_{n,k}(\x) \Big)^{\!2} P_{n+1,k}(\x) \\
  & \qquad = \sum_{k=0}^{\infty} \Big[1-\frac{1}{n}\, T_{n+1,k}(\x) -\frac{2(n+1)}{n(1+\x)}\Big(\frac{k}{n+1}-\x\Big)\Big]^2\,\cP_{n+1,k}(\x) \\
  & \qquad = \sum_{k=0}^{\infty} \Big(1-\frac{1}{n}\, T_{n+1,k}(\x) \Big)^{\!2}\cP_{n+1,k}(\x)
                     - \frac{4(n+1)}{n(1+\x)} \sum_{k=0}^{\infty} \Big(\frac{k}{n+1}-\x\Big) \cP_{n+1,k}(\x) \\
  & \qquad\quad + \frac{4(n+1)}{n^2(1+\x)} \sum_{k=0}^{\infty} T_{n+1,k}(\x)\Big(\frac{k}{n+1}-\x\Big) \cP_{n+1,k}(\x) \\
  & \qquad\quad + \frac{4(n+1)^2}{n^2(1+\x)^2}\sum_{k=0}^{\infty} \Big(\frac{k}{n+1}-\x\Big)^{\!2} \cP_{n+1,k}(\x).
\end{align*}

Now, \eqref{eq:4.2}, Proposition~\ref{pr:4.2} and Proposition~\ref{pr:4.1}\,(b) with $\alpha=1$ yield
\begin{align*}
  \sum_{k=0}^{\infty} \Big(1-\frac1n T_{n,k}(\x) \Big)^{\!2} P_{n+1,k}(\x)
    & = \Phi_{n+1}(1) + 0 + 0 + \frac{4(n+1)^2}{n^2(1+\x)^2}\,\mu_{n+1,2}(\x) \\
    & = 3+\frac{2}{n+1} + \frac{4(n+1)^2}{n^2(1+\x)^2}\cdot\frac{\psi(\x)}{n+1} \\
    & \le 3+\frac{2}{n+1} + \frac{4(n+1)}{n^2}\,.
\end{align*}

Then for all $n\ge 16$ and $\x\in [0,\infty)$,
\begin{equation} \label{eq:6.9}
  \cc_n(x) \le \sqrt{2(n+1)(n+4)}\, \sqrt{3+\frac{2}{n+1} + \frac{4(n+1)}{n^2}}\le 3n.
\end{equation}

From \eqref{eq:6.6}, \eqref{eq:6.7} and \eqref{eq:6.9} we obtain that for all $n\ge 17$,
$$
  \w(\x) \sum_{k=0}^{\infty} \frac{n+k}n\,\big|\wcD \wcP_{n,k}(\x)\big|  \le \ca_n(\x) + \cb_n(\x) + \cc_n(\x) \le 17n, \qquad  \x\in [0,\infty),
$$
i.e.
$$
  \big\|\w\wcD\wV_n \f\big\| \le \wC n\|\w\f\|, \qquad \text{where} \ \wC = 17\,.
$$
\end{proof}

\smallskip
Finally, following the approach of Ditzian and Ivanov~\cite{DiIv1993} we prove a strong converse
inequality.

\smallskip
\begin{proof}[Proof of Theorem~\ref{th:1.2}]

Let $n\in\mathbb{N}$, $n\ge 17$, $f\in C[0,1)$ and $\lambda(n)$, $\theta(n)$ be defined as in
Lemma~\ref{le:6.1}. Applying the Voronovskaya type inequality in Lemma~\ref{le:6.1} for the
operator $\wM_{\ell}$ and function $\wM_n^3 f$ (triple iterated $\wM_n$) replacing $f$ we obtain
\begin{align*}
  \lambda(\ell) \big\|\wD^2 \wM_n^3 f\big\|
    & = \big\|\lambda(\ell)\wD^2 \wM_n^3 f\big\| \\
    & = \big\|\wM_{\ell} \wM_n^3 f - \wM_n^3 f +\lambda(\ell)\wD^2 \wM_n^3 f
        - \wM_{\ell} \wM_n^3 f + \wM_n^3 f\big\| \\
    & \le \big\|\wM_{\ell} \wM_n^3 f - \wM_n^3 f +\lambda(\ell)\wD^2 \wM_n^3 f\big\|
          + \big\|\wM_{\ell} \wM_n^3 f - \wM_n^3 f\big\| \\
    & \le \theta(\ell) \big\|\wD^3 \wM_n^3 f\big\| + \big\|\wM_n^3\big(\wM_{\ell} f - f\big)\big\|.
\end{align*}
Using Lemma~\ref{le:6.2} for the function $\wD^2 \wM_n^2 f$ and successively three times
Lemma~\ref{le:5.1} we get
\begin{align*}
  \lambda(\ell) \big\|\wD^2 \wM_n^3 f\big\|
    & \le \wC\,n\,\theta(\ell) \big\|\wD^2 \wM_n^2 f\big\| + 6\sqrt{6}\,\big\|\wM_{\ell}f - f\big\| \\
    & \le \wC\,n\,\theta(\ell) \big\|\wD^2 \wM_n^2 (f - \wM_n f) + \wD^2 \wM_n^3 f\big\|
          + 15\big\|\wM_{\ell}f - f\big\| \\
    & \le \wC\,n\,\theta(\ell) \big\|\wD^2 \wM_n^2 (f - \wM_n f)\big\|
          + \wC\,n\,\theta(\ell) \big\|\wD^2 \wM_n^3 f\big\| + 15\big\|\wM_{\ell}f - f\big\|.
\end{align*}
Application of the Bernstein type inequality Lemma~\ref{le:6.2} twice for $f-\wM_n f$ yields
$$
  \lambda(\ell) \big\|\wD^2 \wM_n^3 f\big\|
  \le \wC^3 n^3\theta(\ell) \big\|f - \wM_n f\big\|
      + 15\big\|\wM_{\ell} f - f\big\| + \wC\,n\,\theta(\ell) \big\|\wD^2 \wM_n^3 f\big\|.
$$
For $\lambda(n)$ and $\theta(n)$ (see \cite{GaPaUl2025}[Proposition~5]) we have the estimates
$$
  \frac{1}{3n^2} \le \lambda(n) \le \frac{1}{n^2}\,, \qquad
  \theta(n) \le \frac{4}{9n^3}\, \qquad n\ge 2.
$$
Then, applying the lower bound for $\lambda(n)$ we obtain
$$
  \frac{1}{\ell^2} \big\|\wD^2 \wM_n^3 f\big\|
  \le \frac{4\wC^3 n^3}{3\ell^3} \big\|f - \wM_n f\big\|
      + 45\big\|\wM_{\ell} - f\big\| + \frac{4\wC n}{3\ell^3} \big\|\wD^2 \wM_n^3 f\big\|.
$$

Now we choose $\ell$ sufficiently large such that
$$
  \frac{4\wC n}{3\ell^3} \le \frac{1}{2\ell^2}, \qquad\text{i.e.}\qquad \ell \ge \frac{8\wC}{3}\,n.
$$
If we set $L=\frac{8\wC}{3}$, for all integers $\ell\ge Ln$ we have
\begin{align}
  \frac{1}{\ell^2} \big\|\wD^2 \wM_n^3 f\big\|
    & \le \frac{4\wC^3 n^3}{3\ell^3} \big\|f - \wM_n f\big\|
        + 45\big\|\wM_{\ell} f - f\big\| + \frac{1}{2\ell^2} \big\|\wD^2 \wM_n^3 f\big\|, \notag\\
  \frac{1}{2\ell^2} \big\|\wD^2 \wM_n^3 f\big\|
    & \le \frac{4\wC^3 n^3}{3\ell^3} \big\|f - \wM_n f\big\| + 45\big\|\wM_{\ell} f - f\big\|, \notag\\
  \frac{1}{n^2} \big\|\wD^2 \wM_n^3 f\big\|
    & \le \wC^2\,\big\|f - \wM_n f\big\|
        + 90\frac{\ell^2}{n^2}\,\big\|\wM_{\ell} f - f\big\|. \label{eq:6.10}
\end{align}
By Lemma~\ref{le:5.1},
\begin{equation} \label{eq:6.11}
  \begin{aligned}
    \big\|f-\wM_n^3 f\big\|
      & \le \big\|f - \wM_n f\big\| + \big\|\wM_n f - \wM_n^2 f\big\| + \big\|\wM_n^2 f - \wM_n^3 f\big\| \\
      & \le \big\|f - \wM_n f\big\| + \sqrt{6}\big\|f - \wM_n f\big\| + 6\big\|f - \wM_n f\big\| \\
      & < 10\big\|f - \wM_n f\big\|.
  \end{aligned}
\end{equation}
Since $\wM_n^3 f\in W^2_0(\psi)$, from \eqref{eq:6.10} and \eqref{eq:6.11} it follows
\begin{align*}
  K\Big(f,\frac{1}{n^2}\Big)
    & = \inf \Big\{\|f-g\| + \frac{1}{n^2}\,\big\|\wD^2 g\big\|: \ g\in W^2_0(\psi),\,\wD g\in W^2(\psi)  \Big\} \\
    & \le \big\|f-\wM_n^3 f\| + \frac{1}{n^2}\,\big\|\wD^2 \wM_n^3 f\big\| \\
    & \le \big(10+\wC^2\big)\big\|\wM_n f-f\big\|
          + 90\,\frac{\ell^2}{n^2}\,\big\|\wM_{\ell} f-f\big\|.
\end{align*}

Hence, we obtain the following upper estimate of the K-functional,
$$
  K\Big(f,\frac{1}{n^2}\Big) \le
  C\,\frac{\ell^2}{n^2}\big(\big\|\wM_n f - f \big\| + \big\|\wM_{\ell} f - f) \big\| \big),
$$
for all $\ell\ge Ln$, where $C=10+\wC^2$ and $L=\frac{8\wC}{3}$, $\wC=17$\,.
\end{proof}


\bigskip


\begin{thebibliography}{99}

  \bibitem{AbGuIv2004}
          \textsc{U. Abel, V. Gupta, M. Ivan},
          The complete asymptotic expansion for a general Durrmeyer variant of the Meyer-K\"{o}nig
          and Zeller operators,
          \textit{Math. Comput. Modelling} \textbf{40} (2004), 867--875.

  \bibitem{AcAg2019}
          \textsc{A.\,M. Acu, P. Agrawal},
          Better approximation of functions by genuine Bernstein-Durrmeyer type operators,
          \textit{Carpathian J. Math.} \textbf{35} (2019), no. 2, 125--136.

  \bibitem{Ba1957}
          \textsc{V.\,A. Baskakov},
          An example of a sequence of the linear positive operators in the space of continuous functions,
          \textit{Dokl. Akad. Nauk SSSR} \textbf{113} (1957), 249--251.

  \bibitem{Ch1986}
          \textsc{W. Chen},
          On the integral type  Meyer-K\"{o}nig and Zeller operators,
          \textit{Approx. Theory Appl.} \textbf{2} (1986), 7--18.

  \bibitem{Ch1987}
          \textsc{W. Chen},
          On the modified Bernstein-Durrmeyer operator,
          Report of the Fifth Chinese Conference on Approximation Theory,
          Zhen Zhou, China, 1987.

  \bibitem{ChSh1964}
          \textsc{E.\,W. Cheney and A. Sharma},
          Bernstein power series,
          \textit{Canad. J. Math} \textbf{16} (1964), 241--253.

  \bibitem{De1981}
          \textsc{M.\,M. Derriennic},
          Sur l'approximation de functions integrable sur [0,1) par des polynomes de Bernstein modifies,
          \textit{J. Approx. Theory} \textbf{31} (1981), 325--343.

  \bibitem{DiIv1993}
          \textsc{Z. Ditzian and K.\,G. Ivanov},
          Strong converse inequalities,
          \textit{J. Anal. Math.} \textbf{61} (1993), 61--111.

  \bibitem{Du1967}
          \textsc{J.\,L. Durrmeyer},
          Une formule d'inversion de la transform\'{e}e de Laplace: Applications \`{a} la th\'{e}orie des moments,
          Th\`{e}se de 3e cycle, Facult\'{e} des Sciences de l'Universit\'{e} de Paris, 1967.

  \bibitem{Fi2005}
          \textsc{Z. Finta},
          On converse approximation theorems,
          \textit{J. Math. Anal. Appl.} \textbf{312} (2005), 159--180.

  \bibitem{GaPaUl2024}
          \textsc{I. Gadjev, P. Parvanov, R. Uluchev},
          Higher order approximation of functions by modified Goodman-Sharma operators,
          \textit{Constr. Math. Anal.} \textbf{7} (2024), no. 4, 180--195.

  \bibitem{GaPaUl2025}
          \textsc{I. Gadjev, P. Parvanov, R. Uluchev},
          On a new modification of Baskakov operators with higher order of approximation,
          \textit{Mathematics} \textbf{13(1), 64} (2025), 1--18.

  \bibitem{GoSh1988}
          \textsc{T.\,N.\,T. Goodman, A. Sharma},
          A modified Bernstein-Schoenberg operator,
          in ``Constructive Theory of Functions, Varna 1987''
          (Bl. Sendov et al., Eds.), pp. 166--173,
          Publ. House Bulg. Acad. of Sci., Sofia, 1988.

  \bibitem{GoSh1991}
          \textsc{T.\,N.\,T. Goodman, A. Sharma},
          A Bernstein-type operator on the simplex,
          \textit{Math. Balkanica (New Series)} \textbf{5} (1991), no. 2, 129--145.

  \bibitem{Gu1988}
          \textsc{S. Guo},
          Degree of approximation to functions of bounded variation by certain operators,
          \textit{Approx. Theory Appl.} \textbf{4} (1988), no. 2, 9--18.

  \bibitem{Gu1995}
          \textsc{V. Gupta},
          A sharp estimate on the degree of approximation to functions of bounded variation by certain operators,
          \textit{Approx. Theory Appl.} \textbf{11} (1995), no. 3, 106--107.

  \bibitem{GuAb2004}
          \textsc{V. Gupta, U. Abel},
          The rate of convergeance by a new type of Meyer-K\"{o}nig and Zeller operators,
          \textit{Fasc. Math.} \textbf{34} (2004), 15--23.

  \bibitem{GuAh1995}
          \textsc{V. Gupta, A. Ahmad},
          An improved estimate on the degree of approximation to functions of bounded variation by certain operators,
          \textit{Revista Colombiana de Mat.} \textbf{29} (1995), 119--126.

  \bibitem{He2003a}
          \textsc{M. Heilmann},
          Commutativity of Durmeyer-type modifications of Meyer-K\"{o}nig and Zeller and Baskakov operators,
          in ``Constructive Theory of Functions, Varna 2002'' (B. Bojanov, Ed.), pp. 295--301,
          Darba, Sofia, 2003.

  \bibitem{He2003b}
          \textsc{M. Heilmann},
          Eigenfunctions of Durmeyer-type modifications of Meyer-K\"{o}nig and Zeller and Baskakov operators,
          \textit{J. Approx. Theory} \textbf{125} (2003), 63--73.

  \bibitem{He2006}
          \textsc{M. Heilmann},
          Eigenfunctions and eigenvalues for some Durmeyer-type operators,
          in ``Constructive Theory of Functions, Varna 2005'' (B.\,D. Bojanov, Ed.), pp. 158--167,
          Marin Drinov Acad. Publ. House, Sofia, 2006.


  \bibitem{IvPa2011}
          \textsc{K.\,G. Ivanov, P.\,E. Parvanov},
          Weighted approximation by Baskakov-type operators,
          \textit{Acta Math. Hungar.} \textbf{133} (2011), no. 1--2, 33--57.

  \bibitem{IvPa2012}
          \textsc{K.\,G. Ivanov, P.\,E. Parvanov},
          Weighted approximation by Meyer-K\"{o}nig and Zeller-type operators,
          in ``Constructive Theory of Functios, Sozopol 2010'' (G. Nikolov, R. Uluchev, Eds.),
          pp. 150--160, Prof. Marin Drinov Acad. Publ. House, Sofia, 2012.

  \bibitem{JaHa2024}
          \textsc{A.\,F. Jabbar, A\,K. Hassan},
          Better approximation properties by new modified Baskakov operators,
          \textit{J. Appl. Math.} \textbf{2024}(1):8693699.

  \bibitem{MeKoZe1960}
          \textsc{W. Meyer-K\"{o}nig, K. Zeller},
          Bernsteinsche potenzrichen,
          \textit{Studia Math.} \textbf{19} (1960), 89--94.

  \bibitem{PaPo1994}
          \textsc{P.\,E. Parvanov, B.\,D. Popov},
          The limit case of Bernstein's operators with Jacobi weights,
          \textit{Math. Balkanica (N.S.)} \textbf{8} (1994), no. 2--3, 165--177.

  \bibitem{To1983}
          \textsc{V. Totik},
          Uniform approximation by Baskakov and Meyer-K\"{o}nig and Zeller operators,
          \textit{Period. Math. Hungar.} \textbf{14} (1983), no. 3--4, 209--228.

\end{thebibliography}
\end{document}